\renewcommand{\tilde}{\widetilde}
\newcommand{\floor}[1]{\!\left\lfloor #1 \right\rfloor \!} 
\newcommand{\bC}{\ensuremath{\mathbb{C}}}
\newcommand{\bE}{\ensuremath{\mathbb{E}}}
\newcommand{\bN}{\ensuremath{\mathbb{N}}}
\newcommand{\bP}{\ensuremath{\mathbb{P}}}
\newcommand{\bR}{\ensuremath{\mathbb{R}}}
\newcommand{\cG}{\ensuremath{\mathcal{G}}}
\newcommand{\cL}{\ensuremath{\mathcal{L}}}
\newcommand{\cP}{\ensuremath{\mathcal{P}}}
\theoremstyle{plain}
\newtheorem{Thm}{Theorem}[section]
\newtheorem{Prop}[Thm]{Proposition}
\newtheorem{Cor}[Thm]{Corollary}
\theoremstyle{definition}
\newtheorem{Rem}[Thm]{Remark}
\numberwithin{equation}{section}
\renewcommand\section{\@startsection {section}{1}{\z@}%
                                   {-3.5ex \@plus -1ex \@minus -.2ex}%
                                   {2.3ex \@plus.2ex}%
                                   {\normalfont\large\bf}}
\renewcommand\subsection{\@startsection {subsection}{1}{\z@}%
                                   {-3.5ex \@plus -1ex \@minus -.2ex}%
                                   {2.3ex \@plus.2ex}%
                                   {\normalfont\normalsize\bf}}
\begin{document}
\begin{center}
	{\Large \bf 
		Renewal dynamical approach for non-minimal quasi-stationary distributions of one-dimensional diffusions
	}
\end{center}
\begin{center}
	Kosuke Yamato
\end{center}
\begin{center}
	{\small \today}
\end{center}

\begin{abstract}
	We consider quasi-stationary distributions for one-dimensional diffusions via the renewal dynamical approach. We show that convergence of the iterative renewal transform to quasi-stationary distributions is equivalent to a condition on the moment growth rate of the lifetime,
	which is at the same time a necessary condition for the existence of Yaglom limits.
\end{abstract}

\section{Introduction}

	Let $X$ be an irreducible one-dimensional diffusion on the half-line $(0,\infty)$ stopped at $0$ which hits $0$ with probability one: $\bP_{x}[T_0 < \infty] = 1 \ ( x > 0 )$. Here $T_0$ denotes the first hitting time of $0$ and $\bP_x$ denotes the underlying probability measure of $X$ starting from $x$.
	A probability measure $\nu$ $(0,\infty)$ is called a \textit{quasi-stationary distribution} of $X$ when the following holds:
	\begin{align}
		\bP_{\nu}[X_t \in dx \mid T_0 > t] = \nu(dx) \quad (t > 0), \label{} 
	\end{align}
	and $\nu$ is called a \textit{Yaglom limit} of a probability measure $\mu$ on $(0,\infty)$ when
	\begin{align}
		\mu_{t}(dx) := \bP_{\mu}[X_{t} \in dx \mid T_{0} > t] \xrightarrow[t \to \infty]{} \nu(dx). \label{qsdConvergence}
	\end{align}
	Here and hereafter all the convergence of probability distributions is in the sense of the weak convergence.
	For a probability measure $\mu$ on $(0, \infty)$, we introduce the \textit{renewal transform} of $\mu$ by
	\begin{align}
		\Phi \mu (dx) := \frac{1}{\bE_{\mu}T_0} \int_{0}^{\infty}\bP_{\mu}[X_t \in dx, T_{0} > t]dt, \label{renewalTransform} 
	\end{align}
	which is the $0$-potential measure of $X$ normalized to be a probability measure under the initial distribution $\mu$, and can be defined when $\bE_{\mu}T_0 < \infty$.
	The distribution $\Phi \mu$ can be interpreted as the limit distribution of the conservative stochastic process which behaves as $X$ until $T_0$ and as soon as 
	it hits $0$, it jumps into a random point in $(0,\infty)$ according to the probability $\mu$ and starts afresh (see e.g., Ben-Ari and Pinsky \cite{JumpBoundary}).
	The renewal transform has a close relation to quasi-stationary distributions since every quasi-stationary distribution $\nu$ of $X$ is a fixed point of the renewal transform. Indeed, by the Markov property, it holds
	\begin{align}
		\Phi \nu (dx) &:= \frac{1}{\bE_{\nu}T_0} \int_{0}^{\infty}\bP_{\nu}[X_t \in dx \mid T_0 > t] \bP_{\nu}[T_0 > t]dt \label{} \\
		&= \frac{\nu(dx)}{\bE_{\nu}T_0} \int_{0}^{\infty}\bP_{\nu}[T_0 > t]dt = \nu(dx). \label{eq6}
	\end{align}
	This fact leads us to study Yaglom limits through the iterative renewal transform.

	For some diffusions, it is known that there exist infinitely many quasi-stationary distributions,
	and in the case, the set of quasi-stationary distributions is totally ordered by a usual stochastic order (see Proposition \ref{prop:qsdOrdering} for the precise description).
	The minimum element is called the \textit{minimal quasi-stationary distribution}.

	Our main objective in the present paper is to investigate the convergence of the iterative renewal transform to non-minimal quasi-stationary distributions.
	The reason is that as we will see in Theorem \ref{thm:hierarchyOfConds},
	for a probability measures $\mu$ and $\nu$
	the convergence of the iterative renewal transform 
	\begin{align}
		\Phi^{n}\mu \xrightarrow[n \to \infty]{} \nu \label{qsdIRT}
	\end{align}
	is a necessary condition for the convergence \eqref{qsdConvergence},
	and there have been many detailed studies on the convergence \eqref{qsdConvergence} to the minimal quasi-stationary distribution. Thus, we focus on the convergence to non-minimal quasi-stationary distributions.
	In contrast to the minimal case, there are few studies on the convergence \eqref{qsdConvergence} to non-minimal quasi-stationary distributions.
	We review the previous studies in more detail in Section \ref{section:prevStudies}.

	\subsection{Main results} \label{section:main}
	
	To state our main results, we prepare some notation.
	For a subset $S$ of $\bR$, we denote the set of probability measures on $S$ by $\cP (S)$ or $\cP S$.
	Let us fix an irreducible $\frac{d}{dm}\frac{d}{ds}$-diffusion $X$ on $(0,\infty)$ stopped at $0$ with $\bP_{x}[T_{0} < \infty] = 1$ for every $x > 0$.
	Define 
	\begin{align}
		\cP_{\Phi} := \{ \mu \in \cP(0,\infty) \mid \bE_\mu T_0^n < \infty \ ( n \geq 1) \}, \label{}
	\end{align}
	where $\bP_{\mu} := \int \bP_{x} \mu(dx)$.
	We will see in Proposition \ref{rd-Reccurence rel. of FHT} that the renewal transform $\Phi$ is well-defined as the map on $\cP_{\Phi}$; $\Phi$: $\cP_{\Phi} \to \cP_{\Phi}$.
	We denote the normalized $\alpha$-th moment of $T_{0}$ by $m_{\alpha}^{\mu}$ and a density function of $\Phi^{n}\mu$ w.r.t.\ $dm$ by $f_{n}^{\mu}$:
	\begin{align}
		m_{\alpha}^{\mu} := \frac{1}{\alpha!}\bE_{\mu}T_{0}^{\alpha} \quad (\alpha \in [1,\infty)), \quad f_{n}^{\mu}(x)dm(x) = \Phi^{n}\mu(dx), \label{}
	\end{align}
	where $\alpha! = \Gamma(\alpha + 1)$ for the Gamma function $\Gamma$.
	The existence of the density $f_{\mu}^{n}$ will be proven in Proposition \ref{rd-Rep. of density wrt dm}.	
	Let $\lambda_{0} \geq 0$ be the bottom of the $L^2$-spectrum of the generator $-\frac{d}{dm}\frac{d}{ds}$ with Dirichlet boundary condition at $0$ and Neumann boundary condition at $\infty$ if the boundary $\infty$ is regular (see Section \ref{section:FellerBdryClass} for the boundary classification).



	One of our main results is to give a sufficient condition for the convergence \eqref{qsdIRT}.
	Note that a necessary and sufficient condition for existence of infinitely many quasi-stationary distributions and the positivity of $\lambda_{0}$ will be given in Theorem \ref{QSD-char01}.

	\begin{Thm} \label{rd-Convergense of RDS}
		Let $\mu \in \cP_{\Phi}$. Assume that there exist infinitely many quasi-stationary distributions, and the following holds:
		\begin{align}
			\lim_{n \to \infty}\frac{m^{\mu}_{n-1}}{m^{\mu}_{n}} = \lambda \in (0,\lambda_{0}]. \label{rd-eq2} 
		\end{align}
		Then we have
		\begin{align}
			\lim_{n \to \infty}f_n^{\mu}(x) &= \lambda\psi_{-\lambda}(x) \quad (x > 0), \label{rd-eq14} \\
			\Phi^n\mu &\xrightarrow[n \to \infty]{} \nu_{\lambda},  \label{rd-eq15}
		\end{align}
		where a function $u = \psi_{\lambda} \ (\lambda \in \bC)$ is the unique solution of the following equation:
		\begin{align}
		\frac{d}{dm}\frac{d^+}{ds}u(x) = \lambda u(x), \quad 
		\lim_{x \to +0}u(x) = 0,\quad 
		\lim_{x \to +0}\frac{d^{+}}{ds}u(x) = 1 \quad (x \in (0,\infty)), \label{eq63}	
		\end{align}   
		and the probability measure $\nu_{\lambda} \ (\lambda \in (0,\lambda_{0}])$ is a quasi-stationary distribution defined by
		\begin{align}
			\nu_{\lambda}(dx) := \lambda \psi_{-\lambda}(x)dm(x). \label{} 
		\end{align}
	
	\end{Thm}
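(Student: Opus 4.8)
The plan is to turn \eqref{rd-eq14} into a pointwise identification of the limiting density through a second-order eigenvalue equation, and then to upgrade to the weak convergence \eqref{rd-eq15} by Scheffé's lemma. Throughout write $\cG := \frac{d}{dm}\frac{d^{+}}{ds}$ and let $G_{0}$ denote the $0$-potential operator, i.e.\ the inverse of $-\cG$ with Dirichlet condition at $0$ and the Neumann/natural condition at $\infty$ used to define $\lambda_{0}$. Setting $u_{n}:=\frac{1}{n!}\bE_{\cdot}T_{0}^{n}$, the first-hitting-time recursion (Proposition~\ref{rd-Reccurence rel. of FHT}) reads $\cG u_{n}=-u_{n-1}$, so that $u_{n}=G_{0}^{n}1$. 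Combining this with the density representation (Proposition~\ref{rd-Rep. of density wrt dm}) and the $dm$-symmetry of the killed transition density, I would first record
\[
f_{n}^{\mu}=\frac{G_{0}^{n-1}(G_{0}\mu)}{m_{n}^{\mu}},\qquad f_{n}^{\mu}=c_{n}\,G_{0}f_{n-1}^{\mu},\qquad c_{n}:=\frac{m_{n-1}^{\mu}}{m_{n}^{\mu}},
\]
where the normalisation $\int f_{n}^{\mu}\,dm=1$ follows from $\int G_{0}^{n-1}(G_{0}\mu)\,dm=\int u_{n}\,d\mu=m_{n}^{\mu}$. Applying $-\cG$ gives the differential form $-\cG f_{n}^{\mu}=c_{n}f_{n-1}^{\mu}$, and integrating this against $dm$ over $(0,x)$, with the boundary term at $\infty$ killed by the Neumann/natural condition, yields the key identity $\frac{d^{+}}{ds}f_{n}^{\mu}(x)=c_{n}\bigl(1-F_{n-1}(x)\bigr)$ with $F_{n-1}(x):=\Phi^{n-1}\mu((0,x])$; in particular $\frac{d^{+}}{ds}f_{n}^{\mu}(0+)=c_{n}$.

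Next I would establish precompactness of $\{f_{n}^{\mu}\}$. Since $-\cG(G_{0}f_{n-1}^{\mu})=f_{n-1}^{\mu}\ge 0$, the function $G_{0}f_{n-1}^{\mu}$ is concave in the scale $s$, vanishes at $0$, and has slope $1$ there, whence $0\le G_{0}f_{n-1}^{\mu}(x)\le s(x)$ and therefore $f_{n}^{\mu}(x)\le c_{n}s(x)$: a locally uniform bound. The scale-derivatives $\frac{d^{+}}{ds}f_{n}^{\mu}=c_{n}(1-F_{n-1})$ lie in $[0,\sup_{n}c_{n}]$, so the $f_{n}^{\mu}$ are uniformly Lipschitz in $s$ on compact sets. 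By Arzelà--Ascoli every subsequence then has a locally uniformly convergent sub-subsequence.

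To identify such a limit $h$, I would pass to the limit in $-\cG f_{n}^{\mu}=c_{n}f_{n-1}^{\mu}$, using $c_{n}\to\lambda$, to obtain $\cG h=-\lambda h$; the integrated representation $f_{n}^{\mu}(x)=c_{n}\int_{0}^{x}(1-F_{n-1})\,ds$ gives $h(0+)=0$, while the boundary identity forces $\frac{d^{+}}{ds}h(0+)=\lambda$. By the uniqueness clause of \eqref{eq63} this pins the constant exactly and yields $h=\lambda\psi_{-\lambda}$; independence of the subsequence then promotes this to the full pointwise convergence \eqref{rd-eq14}. Finally, since $f_{n}^{\mu}\ge 0$, $f_{n}^{\mu}\to\lambda\psi_{-\lambda}$ pointwise, and $\int f_{n}^{\mu}\,dm=1=\nu_{\lambda}((0,\infty))=\int\lambda\psi_{-\lambda}\,dm$, Scheffé's lemma gives $L^{1}(dm)$-convergence and hence the weak convergence \eqref{rd-eq15}.

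The \emph{main obstacle} is the passage to the limit in the recursion: it requires that the limits of $f_{n_{k}}^{\mu}$ and $f_{n_{k}-1}^{\mu}$ coincide and that no mass escapes to $\infty$, so that $F_{n_{k}-1}$ converges to a genuine distribution function and $h$ solves the eigenequation with $\int h\,dm=1$ rather than with a strict deficit. This tightness at $\infty$ is exactly where the hypotheses are used: the constraint $\lambda\le\lambda_{0}$ together with the existence of infinitely many quasi-stationary distributions guarantees $\psi_{-\lambda}\in L^{1}(dm)$ with $\lambda\int\psi_{-\lambda}\,dm=1$, while the moment-ratio condition $c_{n}\to\lambda$ supplies precisely the exponential decay rate of the lifetime that prevents the loss of mass and selects the non-$L^{2}$ eigenfunction $\psi_{-\lambda}$ rather than the $L^{2}$ ground state attached to $\lambda_{0}$. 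Once this tightness and the agreement of consecutive subsequential limits are in hand, the ODE identification and the Scheffé step are routine.
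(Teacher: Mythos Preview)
Your approach is genuinely different from the paper's. The paper never invokes compactness or subsequential limits; instead, working under the natural scale $s(x)=x$, it introduces the operator $Ig(x):=\int_0^x dy\int_0^y g\,dm$ and uses the identity $Kg(x)=x\int_0^\infty g\,dm - Ig(x)$ to unwind $K^{n-1}G_\mu$ into the finite expansion
\[
f_n^\mu(x)=\sum_{k=1}^{n-1}(-1)^{k-1}\frac{m^\mu_{n-k}}{m^\mu_n}\,I^{k-1}x\;+\;\frac{(-1)^{n-1}}{m^\mu_n}I^{n-1}G_\mu(x).
\]
Since $m^\mu_{n-k}/m^\mu_n\to\lambda^k$ and $\lambda\psi_{-\lambda}(x)=\sum_{k\ge1}(-1)^{k-1}\lambda^{k}I^{k-1}x$, the partial sum converges termwise to $\lambda\psi_{-\lambda}(x)$, dominated by $M\psi_M(x)$ with $M:=\sup_n c_n$; the remainder is disposed of via the crude bound $I^{n-1}G_\mu(x)\le \frac{x}{(n-1)!}\bigl(\int_0^x y\,dm(y)\bigr)^{n-1}$ together with $m^\mu_n\ge M^{-n}$. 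This is entirely constructive: no subsequence matching, no tightness issue.

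Your route---recursion $f_n^\mu=c_nG_0 f_{n-1}^\mu$, Arzel\`a--Ascoli from the uniform bound $f_n^\mu\le c_n s$, ODE identification, then Scheff\'e---is a reasonable strategy, and the Scheff\'e finish is in fact cleaner than the paper's dominated convergence on compacts. But the obstacle you flag is real and you do not close it. From a convergent subsequence $f_{n_k}^\mu\to h$, a further extraction gives $f_{n_k-1}^\mu\to\tilde h$, and the limiting relation is only $-\cG h=\lambda\tilde h$, not $-\cG h=\lambda h$; likewise your boundary identity yields $\frac{d^+}{ds}h(x)=\lambda\bigl(1-\int_0^x\tilde h\,dm\bigr)$, which pins $h$ only once you know $\tilde h$. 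Your closing paragraph asserts that the hypotheses ``supply precisely the exponential decay rate that prevents loss of mass and selects $\psi_{-\lambda}$'', but this is not an argument: nothing you have written forces $h=\tilde h$, nor rules out a strict deficit $\int h\,dm<1$. Iterating produces a tower $h_j=\lambda G_0 h_{j+1}$ with $0\le h_j\le Ms$ and $\int h_j\,dm\le 1$, and since $\lambda$ may lie strictly below $\lambda_0$ you cannot simply appeal to $L^2$ spectral theory to collapse it. The paper's explicit series expansion sidesteps this difficulty completely.
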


	\begin{Rem}
		The function $\psi_{\lambda}$ always exists since the process $X$ is irreducible and the boundary $0$ is regular or exit.
	\end{Rem}

	The other main result is to give a hierarchical structure of necessary conditions for the convergence \eqref{qsdConvergence}.
	
	\begin{Thm} \label{thm:hierarchyOfConds}
		Let $\mu \in \cP_{\Phi}$. Assume that there exist infinitely many quasi-stationary distributions.
		Let us consider the following conditions for $\lambda \in (0,\lambda_{0}]$:
		\begin{enumerate}
			\item $\mu_{t} \xrightarrow[t \to \infty]{} \nu_{\lambda}$.
			\item $\lim_{t \to \infty} \bP_{\mu}[T_{0} > t+s] / \bP_{\mu}[T_0 > t] = \mathrm{e}^{-\lambda s} \ (s > 0)$.
			\item $\lim_{n \to \infty} m^{\mu}_{n-1} / m^{\mu}_{n} = \lambda$.
			\item $\Phi^{n}\mu \xrightarrow[n \to \infty]{} \nu_{\lambda}$.
		\end{enumerate}
		Then the following implications hold: $(i) \Rightarrow (ii)$, $(ii) \Rightarrow (iii)$, $(iii) \Leftrightarrow (iv)$.
	\end{Thm}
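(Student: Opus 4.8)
The plan is to treat the three implications separately, using throughout two auxiliary facts: every quasi-stationary distribution $\nu_{\lambda}$ has an exponentially distributed lifetime, $\bP_{\nu_{\lambda}}[T_{0} > s] = \mathrm{e}^{-\lambda s}$ (so in particular $\bE_{\nu_{\lambda}}T_{0} = 1/\lambda$), and the recurrence relation of Proposition \ref{rd-Reccurence rel. of FHT}, which in normalized form reads $m^{\Phi\mu}_{k} = m^{\mu}_{k+1}/m^{\mu}_{1}$ and hence, upon iteration, $m^{\Phi^{n}\mu}_{1} = m^{\mu}_{n+1}/m^{\mu}_{n} = \bE_{\Phi^{n}\mu}T_{0}$. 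With these in hand, condition (iii), namely $m^{\mu}_{n-1}/m^{\mu}_{n} \to \lambda$, is equivalent to $\bE_{\Phi^{n}\mu}T_{0} \to 1/\lambda$ after an index shift.

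For $(i) \Rightarrow (ii)$ I would start from the Markov-property identity
\begin{align}
\frac{\bP_{\mu}[T_{0} > t+s]}{\bP_{\mu}[T_{0} > t]} = \bP_{\mu_{t}}[T_{0} > s] = \int_{(0,\infty)} \bP_{x}[T_{0} > s]\, \mu_{t}(dx),
\end{align}
obtained by conditioning on $\cF_{t}$ on the event $\{T_{0} > t\}$. Since $x \mapsto \bP_{x}[T_{0} > s]$ is bounded and continuous, the assumed weak convergence $\mu_{t} \to \nu_{\lambda}$ lets me pass to the limit $t \to \infty$ and obtain $\bP_{\nu_{\lambda}}[T_{0} > s] = \mathrm{e}^{-\lambda s}$, which is exactly (ii).

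For $(ii) \Rightarrow (iii)$ I would write $\bE_{\mu}T_{0}^{n} = n\int_{0}^{\infty} t^{n-1} G(t)\,dt$ with $G(t) := \bP_{\mu}[T_{0} > t]$, so that $m^{\mu}_{n-1}/m^{\mu}_{n} = (n-1) I_{n-1}/I_{n}$ with $I_{n} := \int_{0}^{\infty} t^{n-1} G(t)\,dt$. Condition (ii) says precisely that $H(t) := \mathrm{e}^{\lambda t} G(t)$ satisfies $H(t+s)/H(t) \to 1$, i.e.\ $H$ is slowly varying in the additive sense. Writing $I_{n} = \lambda^{-n}(n-1)!\,\bE[H(\gamma_{n})]$ with $\gamma_{n}$ a $\mathrm{Gamma}(n,\lambda)$ variable, the concentration of $\gamma_{n}$ around $n/\lambda$ together with the uniform convergence theorem for slowly varying functions yields $I_{n} \sim \lambda^{-n}(n-1)!\,H(n/\lambda)$, whence $(n-1)I_{n-1}/I_{n} \to \lambda$. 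The delicate point here is the passage from pointwise slow variation to convergence of $\bE[H(\gamma_{n})]$, for which I would use the monotonicity of $G$ to dominate $H$ over the effective window of $\gamma_{n}$.

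Finally, $(iii) \Leftrightarrow (iv)$. The implication $(iii) \Rightarrow (iv)$ is Theorem \ref{rd-Convergense of RDS} applied to $\mu$, since (iii) is its hypothesis \eqref{rd-eq2} and $\lambda \in (0,\lambda_{0}]$. For $(iv) \Rightarrow (iii)$ I would use the equivalence noted above and show $\bE_{\Phi^{n}\mu}T_{0} \to 1/\lambda$. The lower bound $\liminf_{n} \bE_{\Phi^{n}\mu}T_{0} \ge \bE_{\nu_{\lambda}}T_{0} = 1/\lambda$ follows from the portmanteau theorem, as $x \mapsto \bE_{x}T_{0}$ is nonnegative and lower semicontinuous. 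For the matching upper bound I would first note that weak convergence gives $\bP_{\Phi^{n}\mu}[T_{0} > s] \to \bP_{\nu_{\lambda}}[T_{0} > s] = \mathrm{e}^{-\lambda s}$ for each $s$, and then evaluate $\bE_{\Phi^{n}\mu}T_{0} = \int_{0}^{\infty}\bP_{\Phi^{n}\mu}[T_{0} > s]\,ds$ by dominated convergence, the required uniform tail bound coming from Chebyshev's inequality together with $\bE_{\Phi^{n}\mu}T_{0}^{2} = 2 m^{\mu}_{n+2}/m^{\mu}_{n}$. I expect the main obstacle to be exactly this last step: establishing the uniform second-moment bound $\sup_{n} m^{\mu}_{n+2}/m^{\mu}_{n} < \infty$ (equivalently, uniform integrability of the lifetimes under $\Phi^{n}\mu$), which is what rules out mass escaping to $\infty$ in the limit and is not furnished by weak convergence alone.
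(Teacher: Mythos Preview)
Your $(i) \Rightarrow (ii)$ is correct and matches what the paper calls obvious. For $(ii) \Rightarrow (iii)$ the paper takes a cleaner route: via Karamata's theorem it rewrites (ii) as $h(t) := G(t)^{-1}\int_t^\infty G(s)\,ds \to 1/\lambda$, then observes by Fubini that $m_n^\mu = \frac{1}{(n-2)!}\int_0^\infty t^{n-2} G(t)\, h(t)\,dt$ while $m_{n-1}^\mu = \frac{1}{(n-2)!}\int_0^\infty t^{n-2} G(t)\,dt$, so $m_n^\mu/m_{n-1}^\mu$ is an average of $h$ against a probability measure whose mass escapes to $\infty$ as $n \to \infty$. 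This sidesteps your Gamma-concentration step entirely. Note that additive slow variation of $H$ only controls $H(t+s)/H(t)$ uniformly for $s$ in compacts, whereas $\gamma_n$ has spread of order $\sqrt{n}$; Potter-type bounds on the associated multiplicatively slowly varying function translate into $H(t+s)/H(t) \le A\,\mathrm{e}^{\eps|s|}$, which is not obviously enough to justify $\bE[H(\gamma_n)] \sim H(n/\lambda)$. So your route, while plausible, would need more work than you have sketched.

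The genuine gap is in $(iv) \Rightarrow (iii)$: you correctly identify that uniform integrability of $T_0$ under $\Phi^n\mu$ (equivalently $\sup_n m^\mu_{n+2}/m^\mu_n < \infty$) is not furnished by weak convergence, and you do not close it. The paper bypasses this via a continuity lemma for $\Phi$ (Proposition~\ref{prop:contOfPhi}): for compactly supported continuous $f$ one has the identity $(\bE_{\mu_n}T_0)\,\Phi\mu_n(f) = \int Kf\,d\mu_n$ with $Kf(x) = \int_0^\infty (x \wedge y) f(y)\,dm(y)$ bounded and continuous. Setting $\mu_n := \Phi^n\mu$, assumption (iv) gives \emph{both} $\mu_n \to \nu_\lambda$ and $\Phi\mu_n = \mu_{n+1} \to \nu_\lambda = \Phi\nu_\lambda$; hence the right side converges to $\int Kf\,d\nu_\lambda = (\bE_{\nu_\lambda}T_0)\,\nu_\lambda(f)$ and simultaneously $\Phi\mu_n(f) \to \nu_\lambda(f)$. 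Choosing $f$ with $\nu_\lambda(f) \neq 0$ forces $\bE_{\mu_n}T_0 \to \bE_{\nu_\lambda}T_0 = 1/\lambda$, which is (iii). The idea you are missing is to use the two weak limits $\Phi^n\mu \to \nu_\lambda$ and $\Phi^{n+1}\mu \to \nu_\lambda$ \emph{together} through the algebraic link between them, rather than trying to squeeze first-moment convergence out of a single weak limit.
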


	\subsection{Previous studies} \label{section:prevStudies}

	The renewal dynamical approach was intensively investigated in Ferrari, Kesten, Mart\'{\i}nez and Picco \cite{RenewalDynamicalApproach} to show the existence of the minimal quasi-stationary distribution for Markov chains on $\bN$.
	Under mild assumptions, their results ensure for wide range of Markov chains existence of the minimal quasi-stationary distribution. The present paper is strongly motivated by their study though we need a different approach since their arguments focus on the minimal quasi-stationary distribution.
	We also mention Takeda \cite{Takeda:QSD} as a general study on existence of the minimal
	quasi-stationary distribution for symmetric Markov process with the \textit{tightness property}.

	Yaglom limits of one-dimensional diffusions have long been studied.
	Mandl \cite{Mandl} gave a first remarkable result on this subject under the assumption of existence of a natural boundary. He gave a sufficient condition for the convergence to the minimal quasi-stationary distribution.
	His result has been extended and strengthened by many authors e.g., Collet, Mart\'{\i}nez and San Mart\'{\i}n \cite{CMM}, Hening and Kolb \cite{Hening}, Kolb and Steinsaltz \cite{Kolb}, Mart\'{\i}nez and San Mart\'{\i}n \cite{Ratesofdecay}, Littin \cite{Littin} and Cattiaux, Collet, Lambert, Mart\'{\i}nez, M\'{e}l\'{e}ard and San Mart\'{\i}n \cite{QSDpopulation}. These results show that under mild assumptions, convergence to the minimal quasi-stationary distribution follows for all compactly supported initial distributions.

	In contrast, there are few studies on Yaglom limits to non-minimal quasi-stationary distributions.
	In Mart\'{\i}nez, Picco and San Mart\'{\i}n \cite{DoAofBM}, they have considered Brownian motion with negative constant drifts: $X_{t} = B_{t} - ct \ (c > 0)$, where $B$ is a standard Brownian motion, and they gave a set of initial distributions whose Yaglom limit is a non-minimal quasi-stationary distribution.
	In Lladser and San Mart\'{\i}n, they studied Ornstein-Uhlenbeck processes: $dX_{t} = dB_{t} - cX_{t}dt \ (c > 0)$, and obtained the similar results.
	In \cite{QSD_unifying_approach}, a general approach for Yaglom limits to non-minimal quasi-stationary distributions was studied.
	One of the main results in \cite{QSD_unifying_approach} was to reduce the convergence \eqref{qsdConvergence} to the tail behavior of $T_{0}$ through the \textit{first hitting uniqueness}.
	For a set $\cP \subset \cP(0,\infty)$ of probability measures, 
	we say the first hitting uniqueness holds on $\cP$, when the following map is injective.
	\begin{align}
		\cP \ni \mu \longmapsto \bP_{\mu}[T_{0} \in dt]. \label{}
	\end{align}
	The following theorem gives the reduction under the first hitting uniqueness on
	\begin{align}
		\cP_{\mathrm{exp}} := \{ \mu \in \cP(0,\infty) \mid \bP_{\mu}[T_0 \in dt] = \lambda \mathrm{e}^{-\lambda t}dt \quad \text{for some } \lambda > 0 \}. \label{}
	\end{align}
	
	\begin{Thm}[{\cite[Theorem 1.1]{QSD_unifying_approach}}]
		Assume the first hitting uniqueness holds on $\cP_{\mathrm{exp}}$ and there exist $\lambda > 0$ and $\nu \in \cP(0,\infty)$ such that $\bP_{\nu}[T_0 \in dt] = \lambda \mathrm{e}^{-\lambda t}dt$.
		Then for $\mu \in \cP(0,\infty)$, the following are equivalent:
		\begin{enumerate}
			\item $
			\lim_{t \to \infty} \bP_{\mu}[T_0 > t + s] / \bP_{\mu}[T_0 > t] = \mathrm{e}^{-\lambda s} \ (s > 0)$.
			\item $\bP_{\mu_{t}}[T_0 \in ds] \xrightarrow[t \to \infty]{} \lambda \mathrm{e}^{-\lambda s}ds$.
			\item $\mu_t \xrightarrow
			[t \to \infty]{} \nu$.
		\end{enumerate}
	\end{Thm}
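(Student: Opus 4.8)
The plan is to prove the two equivalences $(i)\Leftrightarrow(ii)$ and $(ii)\Leftrightarrow(iii)$ separately, the first being essentially formal and the second carrying the analytic content. The starting point is the Markov-property identity relating the survival ratio of $\mu$ to a hitting-time law of the conditioned process: conditioned on $\{T_0>t\}$ the position $X_t$ is distributed as $\mu_t$, and by the Markov property the evolution after time $t$ is an independent copy of $X$ started afresh from $X_t$, so for every $s>0$
\[
	\frac{\bP_{\mu}[T_0 > t+s]}{\bP_{\mu}[T_0 > t]} = \bP_{\mu_t}[T_0 > s].
\]
Thus condition $(i)$ is precisely the statement that the survival functions $s\mapsto\bP_{\mu_t}[T_0>s]$ converge pointwise on $(0,\infty)$ to $s\mapsto\mathrm{e}^{-\lambda s}$, the survival function of the law $\lambda\mathrm{e}^{-\lambda s}\,ds$. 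Since this limiting law has a continuous distribution function, pointwise convergence of survival functions at every $s>0$ is equivalent to weak convergence of the hitting laws $\bP_{\mu_t}[T_0\in\cdot]$ to $\lambda\mathrm{e}^{-\lambda s}\,ds$; the decay $\mathrm{e}^{-\lambda s}\to0$ of the limiting tail rules out loss of mass, so no compactness issue intervenes. This yields $(i)\Leftrightarrow(ii)$.

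For $(iii)\Rightarrow(ii)$ I would invoke continuity of the hitting-time functional along weakly convergent sequences. For a one-dimensional diffusion the map $x\mapsto\bP_x[T_0>s]$ is bounded and continuous (in fact monotone increasing) in $x$ for each fixed $s>0$, and it tends to $0$ as $x\to+0$. Hence if $\mu_t\to\nu$ weakly in $\cP(0,\infty)$, then $\bP_{\mu_t}[T_0>s]=\int\bP_x[T_0>s]\,\mu_t(dx)\to\int\bP_x[T_0>s]\,\nu(dx)=\mathrm{e}^{-\lambda s}$, i.e.\ the survival functions converge, which by the equivalence just established is $(ii)$.

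The substantive direction is $(ii)\Rightarrow(iii)$, which I would handle by a compactness-and-identification argument. Assuming $(ii)$, the first step is to show that the family $\{\mu_t\}_{t>0}$ is tight in $\cP(0,\infty)$, i.e.\ that no mass escapes to the boundaries $0$ or $\infty$. Granting tightness, let $\mu_{t_k}\to\mu_\infty\in\cP(0,\infty)$ be any weakly convergent subsequence; by the continuity of the hitting functional used above, $\bP_{\mu_\infty}[T_0\in ds]=\lim_k\bP_{\mu_{t_k}}[T_0\in ds]=\lambda\mathrm{e}^{-\lambda s}\,ds$, so $\mu_\infty\in\cP_{\mathrm{exp}}$ with parameter $\lambda$. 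First hitting uniqueness on $\cP_{\mathrm{exp}}$ then forces $\mu_\infty=\nu$, since $\nu$ carries the same exponential hitting law. As every subsequential limit equals $\nu$ and the family is tight, $\mu_t\to\nu$, which is $(iii)$; together with the previous paragraph this closes the cycle.

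I expect tightness of $\{\mu_t\}$ to be the main obstacle. Control near $\infty$ depends on the Feller boundary classification of $\infty$ and should follow from the diffusion's inability to escape in finite time, while control near $0$ should come from the fact that conditioning on long survival penalizes mass close to the absorbing boundary. Quantitatively I would combine the asymptotically exponential decay $\tfrac{1}{t}\log\bP_\mu[T_0>t]\to-\lambda$, which $(ii)$ delivers through the identity above, with hitting-probability bounds expressed via the scale function to estimate $\mu_t(\{x<\varepsilon\})$ and $\mu_t(\{x>R\})$ uniformly in $t$. A secondary technical point is verifying the continuity of $x\mapsto\bP_x[T_0\in\cdot]$ with enough uniformity to pass to the limit on the open interval $(0,\infty)$, which again reduces to standard scale- and Green-function representations for one-dimensional diffusions.
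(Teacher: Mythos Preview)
This theorem is not proved in the present paper: it is quoted verbatim from \cite[Theorem 1.1]{QSD_unifying_approach} as background in Section~\ref{section:prevStudies}, and no argument for it appears here. There is therefore nothing in this paper to compare your proposal against.

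That said, your outline is the standard route and matches the architecture of the original proof in \cite{QSD_unifying_approach}: the Markov-property identity $\bP_{\mu}[T_0>t+s]/\bP_{\mu}[T_0>t]=\bP_{\mu_t}[T_0>s]$ gives $(i)\Leftrightarrow(ii)$; continuity of $x\mapsto\bP_x[T_0>s]$ gives $(iii)\Rightarrow(ii)$; and $(ii)\Rightarrow(iii)$ is tightness plus identification via first hitting uniqueness. You correctly flag tightness as the nontrivial step and leave it as a sketch; that is exactly where the work sits, and your heuristics (boundary classification at $\infty$, penalization near $0$ via the exponential decay rate) point in the right direction but would need to be made precise to constitute a proof.
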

	Applying this theorem, in \cite[Theorem 1.2]{QSD_unifying_approach}, for Kummer diffusions with negative drifts, which are diffusions including the processes treated in \cite{DoAofBM} and \cite{DoAofOU}, a set of initial distributions whose Yaglom limit is non-minimal quasi-stationary distributions were given.
		
	\subsection*{Outline of the paper}
	
	The remainder of the present paper is organized as follows. 
	In Section \ref{section:preliminary}, we will recall several known results on one-dimensional diffusions and quasi-stationary distributions.
	In Section \ref{section:proof}, we will prove Theorem \ref{rd-Convergense of RDS} and Theorem \ref{thm:hierarchyOfConds}.

	\subsection*{Acknowledgement}

	The author would like to thank Kouji Yano, Servet Mart\'{\i}nez, Jaime San Mart\'{\i}n and Jorge Littin Curinao who read an early draft of the present paper and gave him valuable comments.
	This work was supported by JSPS KAKENHI Grant Number JP21J11000, JSPS Open Partnership Joint Research
	Projects grant no.\ JPJSBP120209921 and the Research Institute for Mathematical Sciences,
	an International Joint Usage/Research Center located in Kyoto University and, was carried out under the ISM Cooperative Research Program (2020-ISMCRP-5013).

	\section{Preliminaries}\label{section:preliminary}
	
	In this section, we recall several known results on one-dimensional diffusions and their quasi-stationary distributions. 
	
	\subsection{Feller's canonical form of second-order differential operators} \label{section:FellerBdryClass}
	
	Let $(X,\bP_x)_{x \in I}$ be a one-dimensional diffusion on $I = [0,\ell) \ \text{or} \ [0,\ell] \quad (0 < \ell \leq \infty)$, that is, the process $X$ is a time-homogeneous strong Markov process on $I$ which has a continuous path up to its lifetime.
	Throughout the present paper, we always assume an irreducibility in the following sense:
	\begin{align}
	\bP_x[T_y < \infty] > 0 \quad (x \in I \setminus \{0\}, \ y \in [0,\ell)), \label{eq37}
	\end{align} 
	where $T_y$ denotes the first hitting time of $y$. In addition, we assume $X$ certainly hits $0$ and the point $0$ is a trap;
	\begin{align}
		\bP_{x}[T_0 < \infty] = 1 \quad (x \in I \setminus \{0\}), \quad 
		X_t = 0 \quad \text{for} \ t \geq T_{0}. \label{trap}
	\end{align}
	
	Let us recall Feller's canonical form of the generator (see e.g., It\^o \cite[p.139]{Ito_essentials}).
	There exist a Radon measure $m$ on $I \setminus \{0\}$ with full support and a strictly increasing continuous function $s$ on $(0,\ell)$, and the local generator $\cL$ on $(0, \ell)$ is represented by
	\begin{align}
	\cL = \frac{d}{dm}\frac{d}{ds}. \label{}
	\end{align}
	The measure $m$ is called \textit{the speed measure} and $s$ is called \textit{the scale function} of $X$.
	We say $X$ is a \textit{$\frac{d}{dm}\frac{d}{ds}$-diffusion}.
	For a given second-order ordinary differential operator
	\begin{align}
		\cG = a(x)\frac{d^2}{dx^2} + b(x) \frac{d}{dx} \quad (x \in (0,\ell))
	\end{align}
	with $a(x) > 0$ on $(0,\ell)$, we can give, for example, its speed measure and its scale function by
	\begin{align}
		dm(x) := \frac{1}{a(x)}\exp \left(\int_{c}^{x}\frac{b(y)}{a(y)}dy \right)dx, \quad 
		ds(x) := \exp \left(-\int_{c}^{x}\frac{b(y)}{a(y)}dy\right)dx \quad 
 	\end{align}
	for an arbitrary taken $c \in (0,\ell)$.

	By using the speed measure $m$ and the scale function $s$, the boundaries of $I$ are classified.
	Let $\Delta = 0$ or $\ell$ and take $c \in (0,\ell)$ and set
	\begin{align}
		I(\Delta) = \int_{\Delta}^{c}ds(x)\int_{\Delta}^{x}dm(y), \quad
		J(c) = \int_{\Delta}^{c}dm(x)\int_{\Delta}^{x}ds(y). \label{}
	\end{align}
	The boundary $\Delta$ is classified as follows:
	 \begin{align}
	 	\Delta \ \text{is} \quad
	 	\left\{
	 	\begin{aligned}
	 		&\text{regular} &&\text{when}\  I(\Delta) < \infty, \ J(\Delta) < \infty. \\
	 		&\text{exit} &&\text{when} \ I(\Delta) = \infty, \ J(\Delta) < \infty. \\
	 		&\text{entrance} &&\text{when} \ I(\Delta) < \infty, \ J(\Delta) = \infty. \\
	 		&\text{natural} &&\text{when} \ I(\Delta) = \infty, \ J(\Delta) = \infty.
	 	\end{aligned}
	 	\right.
	 \end{align}
	From \eqref{eq37}, the boundary $0$ is always regular or exit in our setting.
	
	\subsection{Quasi-stationary distributions} \label{subsection:qsd}
	
	We recall a necessary and sufficient condition for existence of infinitely many quasi-stationary distributions.

	Define a function $u = \psi_{\lambda} \ (\lambda \in \bC)$ as the unique solution of the following equation:
	\begin{align}
	\frac{d}{dm}\frac{d^+}{ds}u(x) = \lambda u(x), \quad 
	\lim_{x \to +0}u(x) = 0,\quad 
	\lim_{x \to +0}\frac{d^{+}}{ds}u(x) = 1 \quad (x \in (0,\ell)), \label{}	
	\end{align}   
	where $\frac{d^+}{ds}$ denotes the right-differential operator by the scale function $s$. 
	Note that from the assumption that the boundary $0$ is regular or exit, the function $\psi_{\lambda}$ always exists.
	The operator $L = -\frac{d}{dm}\frac{d}{ds}$ defines a non-negative definite self-adjoint operator on $L^2(I,dm) := \{ f:I \to \bR \mid \int_{I}|f|^2dm < \infty \}$.
	Here we assume the Dirichlet boundary condition at $0$ and the Neumann boundary condition at $\ell$ if the boundary $\ell$ is regular. We denote the infimum of the spectrum of $L$ by $\lambda_0 \geq 0$.
	
	When the boundary $\ell$ is not natural, there exists a unique quasi-stationary distribution.
	Note that from the assumption \eqref{trap}, the boundary $\ell$ cannot be exit.
	\begin{Thm}[{see e.g., \cite[Lemma 2.2, Theorem 4.1]{Littin}}] \label{QSD-char2}
		Assume the boundary $\ell$ is regular or entrance.
		Then $\lambda_0 > 0$ and the function $\psi_{-\lambda_0}$ is strictly positive and integrable w.r.t.\ $dm$ and, there is a unique quasi-stationary distribution
		\begin{align}
			\nu_{\lambda_0}(dx) := \lambda \psi_{-\lambda_0}(x)dm(x) \label{}
		\end{align}
		with $\bP_{\nu_{\lambda_0}}[T_0 \in dt] = \lambda_0 \mathrm{e}^{-\lambda_0 t}dt$.
	\end{Thm}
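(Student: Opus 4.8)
The plan is to read all three assertions off the spectral theory of the self-adjoint operator $L = -\frac{d}{dm}\frac{d}{ds}$ on $L^2(I,dm)$ (Dirichlet at $0$, Neumann at $\ell$), combined with Sturm--Liouville oscillation theory and the boundary classification, and I would proceed in four stages.

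First I would establish that when $\ell$ is regular or entrance, $L$ has compact resolvent and hence purely discrete spectrum $0 \le \lambda_0 < \lambda_1 < \cdots$. For regular $\ell$ this is the classical regular Sturm--Liouville problem on a finite scale-and-speed interval; for entrance $\ell$ one uses the defining integrability $I(\ell) < \infty$ to show that the $0$-Green operator is Hilbert--Schmidt on $L^2(I,dm)$. The bottom $\lambda_0$ is then an eigenvalue; by the variational characterization its eigenspace is one-dimensional and sign-definite, and by Sturm oscillation theory the ground eigenfunction, having no interior zeros, coincides up to scaling with the Dirichlet solution $\psi_{-\lambda_0}$, which must also satisfy the $\ell$-boundary condition precisely because $\lambda_0$ is an eigenvalue. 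This proves $\psi_{-\lambda_0} > 0$ on $(0,\ell)$.

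Second, I would extract positivity of $\lambda_0$ and integrability simultaneously from a single integration. Integrating the identity $d\big(\tfrac{d^+}{ds}\psi_{-\lambda_0}\big) = -\lambda_0\,\psi_{-\lambda_0}\,dm$ over $(0,\ell)$, and using the initial flux $\tfrac{d^+}{ds}\psi_{-\lambda_0}(0+) = 1$ together with the vanishing-flux condition at $\ell$ (the Neumann condition when $\ell$ is regular, automatic when $\ell$ is entrance), gives
\begin{align}
	0 - 1 = -\lambda_0 \int_0^\ell \psi_{-\lambda_0}\,dm.
\end{align}
Were $\lambda_0 = 0$, the function $\psi_0$ would have constant flux $1$, contradicting vanishing flux at $\ell$, and the left side would read $-1 \ne 0$; hence $\lambda_0 > 0$. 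The same identity yields $\int_0^\ell \psi_{-\lambda_0}\,dm = 1/\lambda_0 < \infty$, proving integrability and showing that $\nu_{\lambda_0} = \lambda_0 \psi_{-\lambda_0}\,dm$ is a probability measure. Quasi-stationarity and the exponential lifetime then follow from the eigenfunction relation: since the killed generator $\cL = \frac{d}{dm}\frac{d}{ds}$ satisfies $\cL\psi_{-\lambda_0} = -\lambda_0\psi_{-\lambda_0}$, the killed semigroup obeys $P_t^0\psi_{-\lambda_0} = \mathrm{e}^{-\lambda_0 t}\psi_{-\lambda_0}$, and symmetry of the killed transition density w.r.t.\ $dm$ gives $\bP_{\nu_{\lambda_0}}[X_t \in dx,\, T_0 > t] = \mathrm{e}^{-\lambda_0 t}\nu_{\lambda_0}(dx)$; integrating in $x$ yields $\bP_{\nu_{\lambda_0}}[T_0 > t] = \mathrm{e}^{-\lambda_0 t}$, whence $\bP_{\nu_{\lambda_0}}[T_0 \in dt] = \lambda_0 \mathrm{e}^{-\lambda_0 t}dt$ and, upon dividing, the quasi-stationarity $\bP_{\nu_{\lambda_0}}[X_t \in dx \mid T_0 > t] = \nu_{\lambda_0}(dx)$.

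Finally, for uniqueness I would argue that any quasi-stationary distribution $\nu$ has a density $h = d\nu/dm$ that is a positive eigenfunction of $L$: the Markov property forces an exponential lifetime $\bP_\nu[T_0 \in dt] = \lambda \mathrm{e}^{-\lambda t}dt$, and the resulting identity $\nu P_t^0 = \mathrm{e}^{-\lambda t}\nu$ forces $P_t^0 h = \mathrm{e}^{-\lambda t}h$, placing $h$ in the domain of $L$ and hence making it satisfy both the Dirichlet condition at $0$ and the vanishing-flux condition at $\ell$, i.e.\ a genuine eigenfunction. Positivity of $h$ together with oscillation theory forces $\lambda = \lambda_0$ (the unique eigenvalue admitting a non-sign-changing eigenfunction), and one-dimensionality of the ground space gives $\nu = \nu_{\lambda_0}$. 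The hard part is exactly this last step: showing that being a quasi-stationary distribution really forces the $\ell$-boundary condition that places $h$ in the discrete-spectrum eigenspace — this is where non-naturalness of $\ell$ is used, and it is what separates this rigid, unique-distribution regime from the natural-boundary case of Theorem \ref{QSD-char01} with its one-parameter family. The most technical ingredient is the careful treatment of the entrance boundary, namely establishing compactness of the resolvent and the automatic vanishing of the flux there, and confirming that a positive $L^1$-eigenfunction is in fact the $L^2$ ground state.
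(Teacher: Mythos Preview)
The paper does not supply its own proof of this statement; it is quoted as a preliminary result with a citation to Littin. Your sketch follows the standard spectral-theoretic route underlying that reference and is essentially correct as an outline: discreteness of the spectrum via compact resolvent when $\ell$ is regular or entrance, identification of $\psi_{-\lambda_0}$ as the positive ground state via Sturm oscillation, the flux-integration identity yielding simultaneously $\lambda_0>0$ and $\lambda_0\int_0^\ell\psi_{-\lambda_0}\,dm=1$, quasi-stationarity from $P_t^0\psi_{-\lambda_0}=\mathrm{e}^{-\lambda_0 t}\psi_{-\lambda_0}$ together with $dm$-symmetry, and uniqueness by forcing any quasi-stationary density to be a positive $L^2$-eigenfunction and hence the ground state. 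You also correctly isolate the two genuinely technical points that carry the non-natural hypothesis: (a) compactness of the resolvent in the entrance case, and (b) the fact that a positive $L^1$-eigendensity automatically satisfies the $\ell$-boundary condition and thus lies in the discrete $L^2$-spectrum. Since there is no in-paper proof to compare against, there is nothing further to contrast; your plan is consistent with the cited source.
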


	Recall the following property of the function $\psi_{-\lambda} \ ( \lambda > 0)$.

	\begin{Prop}[{\cite[Lemma 6.7, Lemma 6.18]{Quasi-stationary_distributions}}] \label{propofqsd}
		Suppose $\lambda_{0} > 0$, the boundary $\ell$ is natural and $s(\ell) = \infty$.
		Then for $\lambda > 0$ the following are equivalent:
		\begin{enumerate}
			\item $\lambda \in (0,\lambda_{0}]$.
			\item The function $\psi_{-\lambda}$ is non-negative on $[0,\ell)$.
			\item The function $\psi_{-\lambda}$ is strictly increasing on $[0,\ell)$.
			\item The function $\psi_{-\lambda}$ is strictly positive and integrable on $(0,\ell)$.
		\end{enumerate}
		Moreover, if one of these conditions holds, it holds
		\begin{align}
		1 = \lambda \int_{0}^{\ell}\psi_{-\lambda}(x)dm(x). \label{eq78}
		\end{align}
	\end{Prop}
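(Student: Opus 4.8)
The plan is to fix $\lambda>0$ and prove the cycle $(i)\Rightarrow(iii)\Rightarrow(iv)\Rightarrow(ii)\Rightarrow(i)$, deducing the normalization \eqref{eq78} at the end from any of the equivalent conditions. The workhorse is the once-integrated form of the defining equation for $\psi_{-\lambda}$: integrating $\frac{d}{dm}\frac{d^+}{ds}\psi_{-\lambda}=-\lambda\psi_{-\lambda}$ against $dm$ over $(0,x)$ and inserting the boundary data $\psi_{-\lambda}(0+)=0$, $\frac{d^+}{ds}\psi_{-\lambda}(0+)=1$ gives
\begin{align*}
g(x):=\frac{d^+}{ds}\psi_{-\lambda}(x)=1-\lambda\int_0^x\psi_{-\lambda}(y)\,dm(y),\qquad \psi_{-\lambda}(x)=\int_0^x g\,ds\quad(0<x<\ell).
\end{align*}
Hence the sign of $g$ governs the monotonicity of $\psi_{-\lambda}$, and $g$ is non-increasing on any interval where $\psi_{-\lambda}\geq0$. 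The single external input I will invoke is the classical one-dimensional oscillation dichotomy tying $\lambda_0$ to sign changes: $\psi_{-\lambda}>0$ on $(0,\ell)$ for every $\lambda\leq\lambda_0$, while for $\lambda>\lambda_0$ the solution $\psi_{-\lambda}$ possesses a zero in $(0,\ell)$, which it crosses transversally since a nontrivial solution of a second-order equation has only simple zeros.

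For $(i)\Rightarrow(iii)$, fix $\lambda\in(0,\lambda_0]$, so $\psi_{-\lambda}>0$ on $(0,\ell)$. I claim $g>0$ throughout, which yields strict monotonicity. Since $g(0+)=1$ and $g$ is non-increasing, were $g$ not everywhere positive it would vanish at a first point $x_1\in(0,\ell)$; for $x>x_1$ the integral $\int_0^x\psi_{-\lambda}\,dm$ still grows, so $g$ turns and stays strictly negative, say $g\leq-c_0<0$ on $[x_1+\delta,\ell)$. Then
\begin{align*}
\psi_{-\lambda}(x)\leq \psi_{-\lambda}(x_1+\delta)-c_0\,(s(x)-s(x_1+\delta))\xrightarrow[x\to\ell]{}-\infty
\end{align*}
because $s(\ell)=\infty$, forcing a zero of $\psi_{-\lambda}$ and contradicting positivity. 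Thus $g>0$ and $\psi_{-\lambda}$ is strictly increasing; this is exactly where $s(\ell)=\infty$ is used. For $(iii)\Rightarrow(iv)$: strict increase together with $\psi_{-\lambda}(0+)=0$ makes $\psi_{-\lambda}$ strictly positive, and it also gives $g>0$, i.e.\ $\lambda\int_0^x\psi_{-\lambda}\,dm=1-g(x)<1$; letting $x\to\ell$ shows $\psi_{-\lambda}$ is $dm$-integrable.

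The implication $(iv)\Rightarrow(ii)$ is trivial, and $(ii)\Rightarrow(i)$ is the contrapositive of the oscillation dichotomy: if $\lambda>\lambda_0$ then $\psi_{-\lambda}$ changes sign and $(ii)$ fails, so $(ii)$ forces $\lambda\leq\lambda_0$. This closes the cycle. To obtain \eqref{eq78}, assume the equivalent conditions, so $g$ decreases to a limit $g(\ell-)\geq0$ with $\lambda\int_0^\ell\psi_{-\lambda}\,dm=1-g(\ell-)$; it remains to prove $g(\ell-)=0$. If $g(\ell-)=c_0>0$ then $g\geq c_0$, whence $\psi_{-\lambda}(x)\geq c_0(s(x)-s(0))$ and
\begin{align*}
\int_0^\ell\psi_{-\lambda}\,dm\ \geq\ c_0\int_c^\ell(s(x)-s(c))\,dm(x)\ =\ c_0\,I(\ell)\ =\ \infty,
\end{align*}
the middle equality being the Fubini rewriting of $I(\ell)$ and the divergence being the naturalness of $\ell$; this contradicts integrability, so $g(\ell-)=0$ and \eqref{eq78} follows. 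Naturalness of $\ell$ enters precisely here.

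The main obstacle is the oscillation dichotomy itself: once it is available, everything reduces to elementary bookkeeping with $g$. Proving it amounts to the standard identification of the bottom $\lambda_0$ of the Dirichlet spectrum with the supremum of those $\lambda$ admitting a positive solution of $\frac{d}{dm}\frac{d^+}{ds}u=-\lambda u$, plus the absence of interior zeros of the generalized ground state $\psi_{-\lambda_0}$; some care is needed at the endpoint $\lambda=\lambda_0$, where $\psi_{-\lambda_0}$ is $dm$-integrable but need not belong to $L^2(dm)$.
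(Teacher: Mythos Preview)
The paper does not supply its own proof of this proposition: it is quoted verbatim from \cite[Lemma~6.7, Lemma~6.18]{Quasi-stationary_distributions} and used as a black box. There is therefore no argument in the paper to compare yours against.

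As for the content of your proof, the scheme is sound. The integrated identity $g(x)=1-\lambda\int_0^x\psi_{-\lambda}\,dm$ together with $\psi_{-\lambda}(x)=\int_0^x g\,ds$ is exactly the right device, and your deductions $(iii)\Rightarrow(iv)$, $(iv)\Rightarrow(ii)$, and the normalization \eqref{eq78} via $g(\ell-)=0$ (using $I(\ell)=\infty$ for a natural boundary) are clean and correct. The argument for $(i)\Rightarrow(iii)$ is also fine once one knows $\psi_{-\lambda}>0$; the use of $s(\ell)=\infty$ to force a sign change of $\psi_{-\lambda}$ if $g$ ever hits zero is precisely the point.

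The only substantive dependence is, as you yourself flag, the oscillation dichotomy identifying $\lambda_0$ with $\sup\{\lambda>0:\psi_{-\lambda}>0\text{ on }(0,\ell)\}$, including the borderline case $\lambda=\lambda_0$. That is the content of \cite[Lemma~6.7]{Quasi-stationary_distributions}, so in effect you are reproving \cite[Lemma~6.18]{Quasi-stationary_distributions} while still citing \cite[Lemma~6.7]{Quasi-stationary_distributions}. If you want a fully self-contained proof you would need to supply that spectral characterization as well; otherwise your write-up is a correct and transparent expansion of what the paper merely cites.
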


	The following is a necessary and sufficient condition for
	existence of infinitely many quasi-stationary distributions.

	\begin{Thm} \label{QSD-char01}
		Suppose the boundary $\ell$ is natural.
		Then infinitely many quasi-stationary distributions exist if and only if
		\begin{align}
			\lambda_{0} > 0 \quad \text{and}\quad s(\ell) = \infty. \label{eq39}
		\end{align}
		The condition \eqref{eq39} is equivalent to 
		\begin{align}
		m(c,\ell) < \infty \quad \text{for some} \ c \in (0,\ell) \quad \text{and} \quad \limsup_{x \to \ell}s(x)m(x,\ell) < \infty. \label{eq42}			
		\end{align} 
		In this case, the set of quasi-stationary distributions is $\{ \nu_{\lambda} \}_{\lambda \in (0,\lambda_{0}]}$ for
		\begin{align}
			\nu_{\lambda}(dx) := \lambda \psi_{-\lambda}(x)dm(x), \label{eq73}
		\end{align}
		and it holds $\bP_{\nu_{\lambda}}[T_0 \in dt] = \lambda \mathrm{e}^{-\lambda t}dt$.
	\end{Thm}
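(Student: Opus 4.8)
The plan is to reduce everything to the one–parameter family $\psi_{-\lambda}$, read off the number of quasi-stationary distributions from the positivity and $dm$-integrability of $\psi_{-\lambda}$, and then establish the analytic equivalence $\eqref{eq39}\Leftrightarrow\eqref{eq42}$ by a Hardy-type spectral-gap criterion. First I would record the standard description of quasi-stationary distributions. If $\nu$ is a QSD, the fixed-point identity combined with the Markov property yields the memoryless relation $\bP_{\nu}[T_0>t+s]=\bP_{\nu}[T_0>t]\,\bP_{\nu}[T_0>s]$, so $\bP_{\nu}[T_0>t]=\mathrm{e}^{-\lambda t}$ for some $\lambda\ge 0$, and $\bP_x[T_0<\infty]=1$ forces $\lambda>0$. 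Hence $\nu$ is a left eigenmeasure of the killed semigroup with eigenvalue $\mathrm{e}^{-\lambda t}$, its $dm$-density solves $\cL u=-\lambda u$ with the Dirichlet condition at $0$, and therefore $\nu=\nu_{\lambda}$ as in \eqref{eq73}. Conversely, whenever $\psi_{-\lambda}$ is positive and $dm$-integrable the eigenmeasure relation shows $\nu_{\lambda}$ is a QSD with $\bP_{\nu_{\lambda}}[T_0\in dt]=\lambda\mathrm{e}^{-\lambda t}dt$, the probability normalization constant being exactly $\lambda$ by \eqref{eq78}. Thus QSDs are in bijection with the set of $\lambda>0$ such that $\psi_{-\lambda}>0$ on $(0,\ell)$ and $\psi_{-\lambda}\in L^1(dm)$, which also gives the last assertion of the theorem.

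For the sufficiency, assuming \eqref{eq39} I would apply Proposition \ref{propofqsd}, whose equivalence (i)$\Leftrightarrow$(iv) identifies this $\lambda$-set as precisely $(0,\lambda_0]$: for $\lambda\in(0,\lambda_0]$ the function $\psi_{-\lambda}$ is strictly positive and integrable, so $\nu_{\lambda}$ is a QSD, while for $\lambda>\lambda_0$ it fails to be positive-and-integrable. Since distinct $\lambda$ give distinct killing rates, the $\nu_{\lambda}$ are pairwise distinct, and there are infinitely many, yielding the ``if'' direction together with the explicit description of the QSD set.

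The necessity is the main obstacle, and I would argue it by contraposition, assuming $\ell$ natural but not \eqref{eq39}, i.e.\ $\lambda_0=0$ or $s(\ell)<\infty$. If $\lambda_0=0$, the generalized-principal-eigenvalue theory gives a positive solution of $\cL u=-\lambda u$ only for $\lambda\le\lambda_0=0$, so no $\nu_{\lambda}$ with $\lambda>0$ is a QSD and there are none. If instead $s(\ell)<\infty$, then since $\ell$ is natural one has $m(c,\ell)=\infty$, and for a positive $\psi_{-\lambda}$ the scale-concavity forces $\int_0^{\ell}\psi_{-\lambda}\,dm=\infty$ unless $\psi_{-\lambda}(\ell-)=0$; but $\psi_{-\lambda}(\ell-)=0$ together with positivity on $(0,\ell)$ makes $\lambda$ a principal Dirichlet eigenvalue on the finite scale interval $(s(0),s(\ell))$, which is unique. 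Hence at most one QSD exists, so in either case there are not infinitely many. The delicate points here are the non-$L^2$ eigenfunction analysis for $\lambda<\lambda_0$ and the rigorous treatment of the boundary behavior $\psi_{-\lambda}(\ell-)$ on the finite scale interval.

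Finally I would establish $\eqref{eq39}\Leftrightarrow\eqref{eq42}$ through the weighted Hardy (Muckenhoupt) criterion: the Poincar\'e inequality $\int_0^{\ell}f^2\,dm\le C\int_0^{\ell}(df/ds)^2\,ds$ for $f$ with $f(0)=0$, equivalently $\lambda_0>0$, holds if and only if $\sup_{0<x<\ell}(s(x)-s(0))\,m(x,\ell)<\infty$. Because $m(x,\ell)\to 0$ once $m(c,\ell)<\infty$, this supremum is finite exactly when $m(c,\ell)<\infty$ and $\limsup_{x\to\ell}s(x)m(x,\ell)<\infty$, which is \eqref{eq42}; and under the natural-boundary assumption a finite supremum also forces $s(\ell)=\infty$, since otherwise $m(c,\ell)<\infty$ would make $\ell$ regular. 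Hence \eqref{eq42} is equivalent to \eqref{eq39}, completing the proof.
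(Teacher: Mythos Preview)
Your proposal is essentially correct and reaches the same conclusions, but the route differs from the paper's in two notable ways.

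\textbf{Characterizing the QSDs.} You pass directly from the eigenmeasure relation for the killed semigroup to ``the $dm$-density solves $\cL u=-\lambda u$ with Dirichlet condition at $0$, hence $\nu=\nu_\lambda$.'' The paper instead argues via the renewal transform: from the transition density it shows any QSD $\mu$ has a $dm$-density $\rho$, then uses the Green-function identity \eqref{wellknownformula} and the fixed-point relation $\Phi\mu=\mu$ to obtain the \emph{integral} equation \eqref{eq85}, which is precisely the integrated form of $\cL\rho=-\lambda\rho$ with the correct initial data, giving $\rho=\lambda\psi_{-\lambda}$. Your semigroup/PDE shortcut is standard and valid, but the paper's $\Phi$-based derivation is more self-contained (no differentiation of densities, no regularity issues) and dovetails with the paper's theme; it also immediately yields Corollary~\ref{cor:QSDisFixedPoint}. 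If you keep your route, you should still justify absolute continuity of $\nu$ w.r.t.\ $dm$ and the passage from the eigenmeasure identity to the ODE---the paper does both of these explicitly.

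\textbf{The equivalence \eqref{eq39}$\Leftrightarrow$\eqref{eq42} and necessity.} The paper simply cites Kotani--Watanabe for $\eqref{eq39}\Leftrightarrow\eqref{eq42}$ and \cite[Lemma 6.18]{Quasi-stationary_distributions} for the sufficiency, whereas you sketch the Muckenhoupt/Hardy argument and attempt the necessity direction by contraposition. Your Hardy sketch is the right mechanism (it is essentially what Kotani--Watanabe prove). Your necessity argument is more explicit than the paper's, but, as you yourself note, the two ``delicate points'' are genuine: when $\lambda_0=0$ you are invoking a positivity/nonoscillation statement that is not covered by Proposition~\ref{propofqsd} (which already assumes $s(\ell)=\infty$ and $\lambda_0>0$), and when $s(\ell)<\infty$ the uniqueness-of-principal-eigenvalue step needs a careful boundary analysis. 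These can be completed, but they require more than what you wrote; the paper sidesteps them by citation.
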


	Though Theorem \ref{QSD-char01} can be shown by a combination of known results,
	we prove for completeness.

	\begin{proof}
		We may assume without loss of generality that $s(0) = 0$.
		The equivalence between \eqref{eq39} and \eqref{eq42} follows from \cite[Theorem 3 (ii), Appendix I]{KotaniWatanabe}.
		The fact that $\nu_\lambda \ ( \lambda \in (0,\lambda_0])$ is a quasi-stationary distribution can be shown by the same argument in \cite[Lemma 6.18]{Quasi-stationary_distributions}. Thus, we only show that every quasi-stationary distribution is given by $\nu_{\lambda}$ for some $\lambda \in (0,\lambda_0]$.
		
		Let $\mu$ be a quasi-stationary distribution with
		\begin{align}
			\bP_\mu[T_0 > t] = \mathrm{e}^{-\lambda t} \quad (\lambda > 0). \label{}
		\end{align}
		Recalling that one-dimensional diffusions have a transition density w.r.t.\ its speed measure,
		that is, there exists a jointly-continuous function $p(t,x,y)$ on $(0,\infty) \times (0,\ell)^2$ such that
		\begin{align}
			\bP_{x}[X_{t} \in dx, T_{0} > t] = p(t,x,y)dm(y) \label{}
		\end{align}
		(see e.g., McKean \cite{McKean:elementary}).
		Since it holds
		\begin{align}
			\mathrm{e}^{-\lambda t}\mu(A) = \bP_{\mu}[X_t \in A, T_{0} > t] = \int_{0}^{\ell}1_{A}(y)dm(y)\int_{0}^{\ell}p(t,x,y)\mu(dx),\label{}
		\end{align} 
		the probability measure $\mu$ is absolutely continuous w.r.t.\ $dm$, we denote the density by $\rho$. 
		Applying the well-known formula for non-negative measurable function $f$:
		\begin{align}
			\bE_x\left[\int_{0}^{T_0}f(X_t)1\{T_{0} > t\}dt \right] = \int_{0}^{\ell}(s(x)\wedge s(y))f(y)dm(y) \label{wellknownformula}
		\end{align}
		(see e.g., \cite[Theorem 49.1]{RogersWilliams2} and \cite[Lemma 23.10]{Kallenberg}),
		we have for a measurable set $A \subset (0,\ell)$
		\begin{align}
			\Phi \mu(A) &= \lambda\int_{0}^{\ell}\rho(x)dm(x)\int_{0}^{\ell} (s(x) \wedge s(y))1_{A}(y)dm(y) \label{} \\
			&= \lambda \int_{0}^{\ell} \left( \int_{0}^{x}ds(y)\int_{y}^{\ell}\rho(z)dm(z) \right)1_{A}(x)dm(x). \label{}
		\end{align}
		Since it holds $\Phi \mu = \mu$ from \eqref{eq6}, we obtain
		\begin{align}
			\rho(x) = \lambda \int_{0}^{x}ds(y)\int_{y}^{\ell}\rho(z)dm(z) \quad dm\text{-a.e.} \label{eq85}
		\end{align}
		Since $\int_{0}^{\ell}\rho(x)dm(x) = 1$, the function $u = \rho$ is the solution of the following equation
		\begin{align}
			u(x) = \lambda s(x) - \lambda \int_{0}^{x}ds(y)\int_{0}^{y}u(z)dm(z), \label{}
		\end{align}
		and hence $\rho(x) = \lambda\psi_{-\lambda}(x) \ dm$-a.e.
		From Proposition \ref{propofqsd}, it holds $\lambda \in (0,\lambda_{0}]$ and therefore $\mu = \nu_{\lambda}$. 
	\end{proof}

	From the proof of Theorem \ref{QSD-char01}, we can see the set of quasi-stationary distributions coincides with the set of fixed points of $\Phi$.
	\begin{Cor} \label{cor:QSDisFixedPoint}
		Assume there exists infinitely many quasi-stationary distributions.
		For $\mu \in \cP_{\Phi}$, $\Phi \mu = \mu$ if and only if $\mu = \nu_\lambda$ for some $\lambda \in (0,\lambda_0]$. 
	\end{Cor}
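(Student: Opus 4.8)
The plan is to observe that the computation in the proof of Theorem~\ref{QSD-char01} from \eqref{eq85} onward never used that $\mu$ is a quasi-stationary distribution, only the fixed-point relation $\Phi\mu = \mu$; so almost all of the work is already done, and the corollary is a matter of repackaging that argument. First note that the standing assumption of infinitely many quasi-stationary distributions forces the boundary $\ell$ to be natural: by \eqref{trap} it cannot be exit, and by Theorem~\ref{QSD-char2} it is neither regular nor entrance, since those cases yield a unique quasi-stationary distribution. Hence Theorem~\ref{QSD-char01} applies and gives $\lambda_0 > 0$ and $s(\ell) = \infty$, so the hypotheses of Proposition~\ref{propofqsd} are in force. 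As before we may normalize $s(0) = 0$.

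For the direction $(\Leftarrow)$, if $\mu = \nu_\lambda$ with $\lambda \in (0,\lambda_0]$, then $\nu_\lambda$ is a quasi-stationary distribution by Theorem~\ref{QSD-char01}, and since $\bP_{\nu_\lambda}[T_0 \in dt] = \lambda\mathrm{e}^{-\lambda t}dt$ all moments of $T_0$ are finite, so $\nu_\lambda \in \cP_\Phi$; the computation \eqref{eq6} then shows $\Phi\nu_\lambda = \nu_\lambda$.

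For the direction $(\Rightarrow)$, suppose $\mu \in \cP_\Phi$ with $\Phi\mu = \mu$. Since $\Phi\mu$ admits a density with respect to $dm$ by Proposition~\ref{rd-Rep. of density wrt dm}, so does $\mu = \Phi\mu$; write $\mu(dx) = \rho(x)dm(x)$. Set $\lambda := 1/\bE_\mu T_0$, which is finite and strictly positive because $\mu \in \cP_\Phi$. Applying the formula \eqref{wellknownformula} together with Fubini's theorem exactly as in the proof of Theorem~\ref{QSD-char01} yields
\begin{align}
\Phi\mu(A) = \lambda \int_0^\ell \left( \int_0^x ds(y) \int_y^\ell \rho(z)\,dm(z) \right) 1_A(x)\,dm(x), \nonumber
\end{align}
and the fixed-point relation $\Phi\mu = \mu$ gives precisely \eqref{eq85}. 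Using $\int_0^\ell \rho\,dm = 1$ one then checks, as in the cited proof, that $u = \rho$ solves the Volterra equation $u(x) = \lambda s(x) - \lambda\int_0^x ds(y)\int_0^y u(z)\,dm(z)$ that characterizes $\lambda\psi_{-\lambda}$, whence $\rho = \lambda\psi_{-\lambda}$ $dm$-a.e.; since $\rho \geq 0$, Proposition~\ref{propofqsd} forces $\lambda \in (0,\lambda_0]$ and thus $\mu = \nu_\lambda$.

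The only genuinely new point compared with Theorem~\ref{QSD-char01} is to produce the $dm$-density of $\mu$ and the value $\lambda = 1/\bE_\mu T_0$ directly from the fixed-point equation, rather than reading them off an assumed exponential lifetime. Accordingly I expect the main (and rather minor) obstacle to be the bookkeeping that $\bE_\mu T_0 \in (0,\infty)$ and that $\mu$ inherits a density from $\Phi\mu$, both of which follow from $\mu \in \cP_\Phi$ and Proposition~\ref{rd-Rep. of density wrt dm}; once these are in place the remainder is a verbatim reuse of the earlier argument.
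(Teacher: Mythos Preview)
Your proposal is correct and follows exactly the route the paper indicates: the paper's own ``proof'' is just the one-line remark that the argument of Theorem~\ref{QSD-char01} (from the computation of $\Phi\mu$ via \eqref{wellknownformula} through \eqref{eq85} to $\rho=\lambda\psi_{-\lambda}$) already establishes the corollary, and you have faithfully unpacked that remark, correctly noting that the density of $\mu$ now comes from $\mu=\Phi\mu$ together with Proposition~\ref{rd-Rep. of density wrt dm}, and that $\lambda$ is read off as $1/\bE_\mu T_0$ rather than from an assumed exponential lifetime.
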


	The set $\{ \nu_{\lambda} \}_{\lambda \in (0,\lambda_{0}]}$ is totally ordered by a usual stochastic order, and $\nu_{\lambda_{0}}$ is the minimal element. 
	This is why we call $\nu_{\lambda_0}$ the minimal quasi-stationary distribution.

	\begin{Prop}[{\cite[Proposition 2.3]{QSD_unifying_approach}}] \label{prop:qsdOrdering}
		Suppose there exist infinitely many quasi-stationary distributions $\{ \nu_{\lambda} \}_{\lambda \in (0,\lambda_{0}]}$.
		Then it holds
		\begin{align}
			\nu_{\lambda}(x,\ell) \leq \nu_{\lambda'}(x,\ell) \quad (x \in (0,\ell),\ 0 < \lambda' \leq \lambda \leq \lambda_0). \label{}
		\end{align}
	\end{Prop}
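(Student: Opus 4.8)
The plan is to reduce the stochastic ordering to a pointwise monotonicity statement for the scale-derivatives of the eigenfunctions $\psi_{-\lambda}$, and then prove that by a Wronskian comparison. First I would record the key representation of the tail. Since infinitely many quasi-stationary distributions exist, Theorem \ref{QSD-char01} forces the boundary $\ell$ to be natural with $\lambda_{0}>0$ and $s(\ell)=\infty$, so Proposition \ref{propofqsd} applies: for each $\lambda\in(0,\lambda_{0}]$ the function $\psi_{-\lambda}$ is strictly positive on $(0,\ell)$ and $\lambda\int_{0}^{\ell}\psi_{-\lambda}\,dm=1$ by \eqref{eq78}. Normalizing $s(0+)=0$ and integrating the defining equation $\frac{d}{dm}\frac{d^{+}}{ds}\psi_{-\lambda}=-\lambda\psi_{-\lambda}$ from $0$, together with $\frac{d^{+}}{ds}\psi_{-\lambda}(0+)=1$, gives
\begin{align*}
	\frac{d^{+}}{ds}\psi_{-\lambda}(x)=1-\lambda\int_{0}^{x}\psi_{-\lambda}\,dm=\lambda\int_{x}^{\ell}\psi_{-\lambda}\,dm=\nu_{\lambda}(x,\ell).
\end{align*}
Hence the claim is exactly that, for each fixed $x$, the quantity $\frac{d^{+}}{ds}\psi_{-\lambda}(x)$ is non-increasing in $\lambda$ on $(0,\lambda_{0}]$.

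Next I would compare two parameters $\lambda\geq\lambda'$ through the Wronskian $W(x):=\psi_{-\lambda}(x)\frac{d^{+}}{ds}\psi_{-\lambda'}(x)-\psi_{-\lambda'}(x)\frac{d^{+}}{ds}\psi_{-\lambda}(x)$. Applying the product rule for the generalized operator $\frac{d}{dm}\frac{d^{+}}{ds}$ and the two eigenvalue equations, the first-order ($ds$) terms cancel and one computes $dW=(\lambda-\lambda')\psi_{-\lambda}\psi_{-\lambda'}\,dm$. Since $\psi_{-\lambda}(0+)=\psi_{-\lambda'}(0+)=0$ we have $W(0+)=0$, so $W(x)=(\lambda-\lambda')\int_{0}^{x}\psi_{-\lambda}\psi_{-\lambda'}\,dm\geq 0$. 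Dividing by $\psi_{-\lambda}\psi_{-\lambda'}>0$ this reads
\begin{align*}
	\frac{\frac{d^{+}}{ds}\psi_{-\lambda}(x)}{\psi_{-\lambda}(x)}\leq\frac{\frac{d^{+}}{ds}\psi_{-\lambda'}(x)}{\psi_{-\lambda'}(x)}\qquad(x\in(0,\ell)),
\end{align*}
i.e.\ the logarithmic scale-derivatives are ordered.

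From this I would deduce the result in two moves. First, the displayed inequality says $\frac{d^{+}}{ds}\log\psi_{-\lambda}\leq\frac{d^{+}}{ds}\log\psi_{-\lambda'}$, so $\log\psi_{-\lambda}-\log\psi_{-\lambda'}$ is non-increasing in $x$; since $\psi_{-\lambda}(x)/\psi_{-\lambda'}(x)\to 1$ as $x\to 0+$ (both eigenfunctions are asymptotic to $s(x)$ near the regular/exit boundary $0$), this difference vanishes at $0+$ and is therefore $\leq 0$, giving $\psi_{-\lambda}(x)\leq\psi_{-\lambda'}(x)$ on $(0,\ell)$. Second, combining the ratio inequality with $\psi_{-\lambda}\leq\psi_{-\lambda'}$ and the positivity of $\psi_{-\lambda'}$ and of $\frac{d^{+}}{ds}\psi_{-\lambda'}=\nu_{\lambda'}(x,\ell)\geq 0$,
\begin{align*}
	\frac{d^{+}}{ds}\psi_{-\lambda}(x)=\psi_{-\lambda}(x)\,\frac{\frac{d^{+}}{ds}\psi_{-\lambda}(x)}{\psi_{-\lambda}(x)}\leq\psi_{-\lambda'}(x)\,\frac{\frac{d^{+}}{ds}\psi_{-\lambda'}(x)}{\psi_{-\lambda'}(x)}=\frac{d^{+}}{ds}\psi_{-\lambda'}(x),
\end{align*}
which by the tail representation is precisely $\nu_{\lambda}(x,\ell)\leq\nu_{\lambda'}(x,\ell)$.

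The routine parts are the product-rule computation for $W$ (standard calculus for $\frac{d}{dm}\frac{d}{ds}$-operators) and the final chain of inequalities. The main obstacle I anticipate is the boundary analysis underlying the first move: justifying cleanly that $\psi_{-\lambda}(x)\sim s(x)$, equivalently $\psi_{-\lambda}/\psi_{-\lambda'}\to 1$, as $x\to 0+$, and ensuring that the strict positivity of all the quantities involved persists on $(0,\ell)$ — which is exactly where the restriction $\lambda,\lambda'\leq\lambda_{0}$ enters, via Proposition \ref{propofqsd}. Everything else is bookkeeping around the Wronskian identity.
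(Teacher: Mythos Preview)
The paper does not supply its own proof of this proposition: it is quoted verbatim from \cite[Proposition 2.3]{QSD_unifying_approach} with no argument given, so there is nothing in the present paper to compare your proposal against.

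That said, your argument is correct and is essentially the standard Sturm--Liouville comparison one expects in this setting. The tail identity $\nu_{\lambda}(x,\ell)=\frac{d^{+}}{ds}\psi_{-\lambda}(x)$ is exactly right, the Wronskian computation $dW=(\lambda-\lambda')\psi_{-\lambda}\psi_{-\lambda'}\,dm$ is the usual Lagrange identity for $\frac{d}{dm}\frac{d}{ds}$, and the two-step passage from the log-derivative ordering to the derivative ordering via $\psi_{-\lambda}\leq\psi_{-\lambda'}$ is sound because all the quantities involved are non-negative (Proposition~\ref{propofqsd} gives strict positivity and increase of $\psi_{-\lambda}$ for $\lambda\leq\lambda_{0}$). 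The boundary asymptotic $\psi_{-\lambda}(x)/s(x)\to 1$ as $x\to 0+$ that you flag as the main obstacle is immediate from the initial condition $\frac{d^{+}}{ds}\psi_{-\lambda}(0+)=1$ together with continuity of the scale-derivative, so there is no real difficulty there. One minor streamlining: once you have $W(x)\geq 0$ and $\psi_{-\lambda}\leq\psi_{-\lambda'}$, you can read off the conclusion directly as
\[
\psi_{-\lambda'}\Bigl(\tfrac{d^{+}}{ds}\psi_{-\lambda'}-\tfrac{d^{+}}{ds}\psi_{-\lambda}\Bigr)
= W(x)+(\psi_{-\lambda'}-\psi_{-\lambda})\tfrac{d^{+}}{ds}\psi_{-\lambda'}\geq 0,
\]
which avoids the detour through logarithms.
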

	
	\section{Proof of the main results} \label{section:proof}

	For every $\frac{d}{d\tilde{m}}\frac{d}{ds}$-diffusion $X$ on $(0,\ell) \ (0 < \ell \leq \infty)$ satisfying \eqref{eq39}, the diffusion $s(X)$ is $\frac{d}{dm}\frac{d}{dx}$-diffusion on $(0,\infty)$ for $dm(x) = d\tilde{m}(s^{-1}(x))$. Thus, we may assume without loss of generality that the diffusion is under the natural scale: $s(x) = x$. 
	In this section, we only consider such natural scale diffusions on $(0,\infty)$.
	
	At first, we check that $\Phi$ preserves $\cP_{\Phi}$.
	\begin{Prop} \label{rd-Reccurence rel. of FHT}
		Let $\mu \in \cP_{\Phi}$.
		For $\alpha \in [1,\infty)$, we have
		\begin{align}
			\bE_{\Phi\mu}T_{0}^{\alpha} = \frac{\bE_\mu T_0^{\alpha + 1}}{(\alpha + 1) \bE_\mu T_0}. \label{rd-eq10}
		\end{align}
		More generally, we have for $0 \leq k \leq m$
		\begin{align}
			\bE_{\Phi^m\mu}T_{0}^{\alpha} = {\binom{\alpha+k}{\alpha}}^{-1}\cdot 
			\frac{\bE_{\Phi^{m-k}\mu}T_0^{\alpha + k}}{\bE_{\Phi^{m-k}\mu}T_0^k}. \label{rd-eq9}
		\end{align}
	\end{Prop}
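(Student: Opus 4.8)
The plan is to compute $\bE_{\Phi\mu}T_0^\alpha$ directly from the definition \eqref{renewalTransform} and then iterate. Writing $\bE_{\Phi\mu}T_0^\alpha = \int_{(0,\infty)}\bE_x[T_0^\alpha]\,\Phi\mu(dx)$ and substituting the renewal transform, I would use Tonelli (all integrands being non-negative) to arrive at
\begin{align}
	\bE_{\Phi\mu}T_0^\alpha = \frac{1}{\bE_\mu T_0}\int_0^\infty\left(\int_{(0,\infty)}\bE_x[T_0^\alpha]\,\bP_\mu[X_t \in dx,\, T_0 > t]\right)dt. \n
\end{align}
The key step is to recognize, via the Markov property, that the inner integral equals the $\alpha$-th moment of the \emph{remaining} lifetime after time $t$. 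Applying the simple Markov property at the deterministic time $t$ gives $\bE_\mu[(T_0 \circ \theta_t)^\alpha 1\{T_0 > t\}] = \bE_\mu[\bE_{X_t}[T_0^\alpha]\,1\{T_0 > t\}]$, where $\theta_t$ is the shift operator, and the trap property \eqref{trap} identifies $T_0 \circ \theta_t = T_0 - t$ on the event $\{T_0 > t\}$. Hence the inner integral equals $\bE_\mu[(T_0 - t)^\alpha 1\{T_0 > t\}]$. This Markov-property identity is the part that needs genuine care and is the main obstacle; everything after it is computation.

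With this identity in hand, a second application of Tonelli moves the $t$-integral inside the expectation, and the substitution $u = T_0 - t$ evaluates the inner integral:
\begin{align}
	\bE_{\Phi\mu}T_0^\alpha = \frac{1}{\bE_\mu T_0}\,\bE_\mu\!\left[\int_0^{T_0}(T_0 - t)^\alpha\,dt\right] = \frac{1}{\bE_\mu T_0}\,\bE_\mu\!\left[\frac{T_0^{\alpha+1}}{\alpha+1}\right], \n
\end{align}
which is exactly \eqref{rd-eq10}. Since $\mu \in \cP_\Phi$ guarantees $\bE_\mu T_0^{\alpha+1} < \infty$ while $0 < \bE_\mu T_0 < \infty$, the right-hand side is finite for every $\alpha \geq 1$; this simultaneously yields $\Phi\mu \in \cP_\Phi$ and shows that $\Phi$ maps $\cP_\Phi$ into itself.

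For the general identity \eqref{rd-eq9} I would induct on $k$, the case $k = 0$ being immediate since $\binom{\alpha}{\alpha} = 1$ and $T_0^0 = 1$. For the inductive step I apply \eqref{rd-eq10} to the measure $\Phi^{m-k-1}\mu$ with exponents $\alpha+k$ and $k$, so that the common factor $\bE_{\Phi^{m-k-1}\mu}T_0$ cancels and
\begin{align}
	\frac{\bE_{\Phi^{m-k}\mu}T_0^{\alpha+k}}{\bE_{\Phi^{m-k}\mu}T_0^{k}} = \frac{k+1}{\alpha+k+1}\cdot\frac{\bE_{\Phi^{m-k-1}\mu}T_0^{\alpha+k+1}}{\bE_{\Phi^{m-k-1}\mu}T_0^{k+1}}. \n
\end{align}
Combining with the inductive hypothesis, the formula for $k+1$ follows once I check the binomial identity $\binom{\alpha+k}{\alpha}^{-1}\cdot\frac{k+1}{\alpha+k+1} = \binom{\alpha+k+1}{\alpha}^{-1}$, which is immediate from $\binom{\alpha+k+1}{\alpha} = \binom{\alpha+k}{\alpha}\cdot\frac{\alpha+k+1}{k+1}$ using $\binom{\beta}{\alpha} = \Gamma(\beta+1)/(\Gamma(\alpha+1)\Gamma(\beta-\alpha+1))$. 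Thus the whole proposition reduces to the single Markov-property computation, with the rest being bookkeeping with Gamma functions.
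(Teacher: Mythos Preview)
Your proof is correct and follows essentially the same route as the paper: use the Markov property to identify the inner integral with a moment of the remaining lifetime $T_0 - t$, then integrate in $t$. The only cosmetic difference is that the paper first rewrites $\bE_{\Phi\mu}T_0^\alpha$ via the tail formula $\alpha\int_0^\infty t^{\alpha-1}\bP_{\Phi\mu}[T_0>t]\,dt$ and works with survival probabilities throughout, whereas you integrate the function $x\mapsto\bE_x[T_0^\alpha]$ against $\Phi\mu$ directly; both computations hinge on the same Markov identity and the paper likewise defers \eqref{rd-eq9} to induction on $k$.
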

	
	\begin{proof}
		Since \eqref{rd-eq9} follows from \eqref{rd-eq10} by induction, we only prove \eqref{rd-eq10}.
		From the Markov property, it holds
		\begin{align}
			\bE_{\Phi\mu}T_{0}^{\alpha} &= \alpha\int_{0}^{\infty}t^{\alpha-1}\bP_{\Phi\mu}[T_0 > t]dt \label{} \\
			&= \frac{\alpha}{\bE_{\mu}T_{0}}\int_{0}^{\infty}t^{\alpha-1}dt
			\int_{0}^{\infty}ds \int_{0}^{\infty} \bP_{x}[T_0 > t] \bP_{\mu}[X_{s} \in dx] \label{}  \\
			&= \frac{\alpha}{\bE_{\mu}T_{0}}\int_{0}^{\infty}t^{\alpha-1}dt
			\int_{0}^{\infty} \bP_{\mu}[T_{0} > t+s ] ds \label{}  \\
			&= \frac{1}{\bE_{\mu}T_{0}} \int_{0}^{\infty} s^{\alpha}\bP_{\mu}[T_{0} > s ] ds \label{} \\
			&= \frac{\bE_\mu T_0^{\alpha+1}}{(\alpha+1)\bE_\mu T_0}. \label{}
		\end{align}
	\end{proof}

	For the proof of Theorem \ref{rd-Convergense of RDS}, we need some preparation.
	For a function $g: (0,\infty) \to \bR$ with
	\begin{align}
		\int_{0}^{R}dy\int_{y}^{\infty}|g(z)|dm(z) < \infty \quad \text{for every } R > 0, \label{rd-eq12}
	\end{align}
	define an integral operator $K$ by
	\begin{align}
		Kg(x) := \int_{0}^{x}dy\int_{y}^{\infty}g(z)dm(z) = \int_{0}^{\infty}(x\wedge y)g(y)dm(y) \quad (x > 0). \label{}
	\end{align}
	
	Let us recall the formula \eqref{wellknownformula}. Then for a function $g$ with \eqref{rd-eq12}, it holds
		\begin{align}
			\bE_\mu \int_{0}^{T_0}g(X_t)dt = \int_{0}^{\infty}\mu(dx)\int_{0}^{\infty}(x\wedge y )g(y)dm(y) = \int_{0}^{\infty}Kg(x)\mu(dx). \label{rd-green function formula}
		\end{align}
	Applying \eqref{rd-green function formula}, we obtain the density function of $\Phi^{n}\mu$.	

	\begin{Prop}\label{rd-Rep. of density wrt dm}
		For $\mu \in \cP_{\Phi}$ and $n \geq 1$, there exists a density $f_n^{\mu}$ of $\Phi^n\mu$ w.r.t.\ $dm$; $\Phi^{n}\mu(dx) = f^{\mu}_{n}(x)dm(x)$. It is given by
		\begin{align}
			f_n^{\mu}(x) = \frac{1}{m_{n}^{\mu}}K^{n-1}G_\mu(x) \quad dm\text{-a.e.}, \label{}
		\end{align}
		where $G_\mu(x) := \int_{0}^{x}\mu(y,\infty)dy$ and we denote $K^\ell g:= K(K^{\ell -1} g) \  ( \ell \geq 1)$.
	\end{Prop}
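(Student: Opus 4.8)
The plan is to induct on $n$, establishing the case $n = 1$ by hand from the Green function formula \eqref{rd-green function formula} and then propagating the formula with the help of Proposition \ref{rd-Reccurence rel. of FHT}. Throughout I read $K^{0}$ as the identity operator, so for $n = 1$ the asserted identity is simply $f_{1}^{\mu} = G_{\mu}/m_{1}^{\mu}$, where $m_{1}^{\mu} = \bE_{\mu}T_{0}$.

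\textbf{Base case.} For an interval $A = (a,b)$ with $0 < a < b < \infty$ the integrability hypothesis \eqref{rd-eq12} for $g = 1_{A}$ holds, since $\int_{y}^{\infty}1_{A}\,dm \le m(a,b) < \infty$ by the Radon property of $m$. Writing $\Phi\mu(A) = (\bE_{\mu}T_{0})^{-1}\bE_{\mu}\int_{0}^{T_{0}}1_{A}(X_{t})\,dt$ and applying \eqref{rd-green function formula} with $g = 1_{A}$, I get $\Phi\mu(A) = (\bE_{\mu}T_{0})^{-1}\int_{0}^{\infty}\mu(dx)\int_{A}(x\wedge y)\,dm(y)$. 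The key computation is the Fubini identity
\[
\int_{0}^{\infty}(x\wedge y)\,\mu(dx) = \int_{0}^{\infty}\mu(dx)\int_{0}^{\infty}1\{z<x\}1\{z<y\}\,dz = \int_{0}^{y}\mu(z,\infty)\,dz = G_{\mu}(y),
\]
so that $\Phi\mu(A) = \int_{A}G_{\mu}(y)/m_{1}^{\mu}\,dm(y)$. As the intervals $(a,b)$ form a $\pi$-system generating the Borel sets of $(0,\infty)$ and containing an exhausting sequence, and both sides are finite measures, this identifies the density $f_{1}^{\mu} = G_{\mu}/m_{1}^{\mu}$.

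\textbf{Inductive step.} Assume $\Phi^{n}\mu(dx) = f_{n}^{\mu}(x)\,dm(x)$ with $f_{n}^{\mu} = (m_{n}^{\mu})^{-1}K^{n-1}G_{\mu}$. Since $\Phi$ maps $\cP_{\Phi}$ into itself by Proposition \ref{rd-Reccurence rel. of FHT}, I may apply the base case to $\nu := \Phi^{n}\mu$, which gives $f_{n+1}^{\mu} = G_{\Phi^{n}\mu}/\bE_{\Phi^{n}\mu}T_{0}$. By the induction hypothesis together with the definition of $K$,
\[
G_{\Phi^{n}\mu}(x) = \int_{0}^{x}dy\int_{y}^{\infty}f_{n}^{\mu}(z)\,dm(z) = \frac{1}{m_{n}^{\mu}}K\bigl(K^{n-1}G_{\mu}\bigr)(x) = \frac{1}{m_{n}^{\mu}}K^{n}G_{\mu}(x);
\]
in particular $K^{n}G_{\mu}(x) = m_{n}^{\mu}G_{\Phi^{n}\mu}(x) \le m_{n}^{\mu}\,x < \infty$, so the iterated kernel is automatically finite and no separate integrability check is needed. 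For the normalizing constant, the recurrence \eqref{rd-eq9} with $\alpha = 1$ and $k = m = n$ gives $\bE_{\Phi^{n}\mu}T_{0} = (n+1)^{-1}\bE_{\mu}T_{0}^{n+1}/\bE_{\mu}T_{0}^{n} = m_{n+1}^{\mu}/m_{n}^{\mu}$. Combining the two displays yields $f_{n+1}^{\mu} = (m_{n+1}^{\mu})^{-1}K^{n}G_{\mu}$, closing the induction.

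The only genuinely delicate point is the base case: choosing test sets bounded away from the boundaries so that \eqref{rd-green function formula} legitimately applies, and carrying out the Fubini exchange that produces $G_{\mu}$. The remainder is bookkeeping — the finiteness of $K^{n}G_{\mu}$ falls out of the representation itself, and the moment ratio $m_{n+1}^{\mu}/m_{n}^{\mu}$ is delivered directly by Proposition \ref{rd-Reccurence rel. of FHT}.
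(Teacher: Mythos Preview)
Your proof is correct and follows essentially the same route as the paper: both establish the base case $f_1^{\mu}=G_\mu/m_1^{\mu}$ from \eqref{rd-green function formula}, use the identity $G_{\Phi\nu}=K f_1^{\nu}$, and collapse the normalizing constants via Proposition~\ref{rd-Reccurence rel. of FHT}. The only cosmetic difference is that the paper iterates by invoking the (formal) self-adjointness of $K$ under $dm$ to shift $K$ onto a test function, whereas you run a clean induction on the density directly; your version avoids that extra device and is arguably tidier.
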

	
	\begin{proof}
		From \eqref{rd-green function formula}, we have for a bounded measurable function $g$ with compact support on $(0,\infty)$,
		\begin{align}
			\int_{0}^{\infty}g(y)\Phi\mu(dx) 
			&= \frac{1}{\bE_\mu T_0}\int_{0}^{\infty}\mu(dx)\int_{0}^{\infty}(x\wedge y)g(y)dm(y) \label{} \\
			&= \frac{1}{\bE_\mu T_0}\int_{0}^{\infty}G_\mu(y)g(y)dm(y). \label{}
		\end{align}
		Since it holds that
		\begin{align}
			G_{\Phi\mu}(x) &= \int_{0}^{x}\Phi\mu(y,\infty)dy \label{} \\
			&= \frac{1}{\bE_{\mu}T_0}\int_{0}^{x}dy\int_{y}^{\infty}G_\mu(z)dm(z) \label{} \\
			&= \frac{1}{\bE_{\mu}T_0}KG_{\mu}(x) \label{rd-eq13}
		\end{align}
		it follows from Proposition \ref{rd-Reccurence rel. of FHT} and the (formal) self-adjointness of $K$ under $dm$ that
		\begin{align}
			&\int_{0}^{\infty}g(y)\Phi^n\mu(dy) \label{} \\
			=&  \frac{1}{\bE_{\Phi^{n-1}\mu} T_0}\int_{0}^{\infty}G_{\Phi^{n-1}\mu}(y)g(y)dm(y) \label{} \\
			=&  \frac{1}{(\bE_{\Phi^{n-1}\mu} T_0)(\bE_{\Phi^{n-2}\mu}T_0)}\int_{0}^{\infty}KG_{\Phi^{n-2}\mu}(y)g(y)dm(y) \label{} \\ 
			=&  \frac{1}{(\bE_{\Phi^{n-1}\mu} T_0)(\bE_{\Phi^{n-2}\mu}T_0)}\int_{0}^{\infty}G_{\Phi^{n-2}\mu}(y)Kg(y)dm(y) \label{} \\
			=& \cdots \label{} \\
			=&  \frac{1}{(\bE_{\Phi^{n-1}\mu}T_0)(\bE_{\Phi^{n-2}\mu} T_0)\cdots(\bE_{\Phi\mu} T_0)(\bE_{\mu}T_0)}\int_{0}^{\infty}K^{n-1}G_{\mu}(y)g(y)dm(y) \label{} \\
			=& \frac{n!}{\bE_{\mu}T_0^n}\int_{0}^{\infty}K^{n-1}G_{\mu}(y)g(y)dm(y). \label{}
		\end{align}
		The proof is complete.
	\end{proof}
	
	Now we prove Theorem \ref{rd-Convergense of RDS}.
	\begin{proof}[Proof of Theorem \ref{rd-Convergense of RDS}]
		For a function $g$ with
		\begin{align}
			\int_{0}^{R}dy\int_{0}^{y}|g(z)|dm(z) < \infty \label{}
		\end{align}
		for every $R > 0$, define an integral operator $I$ by
		\begin{align}
			Ig(x) := \int_{0}^{x}dy \int_{0}^{y}g(z)dm(z) \quad (x > 0). \label{}
		\end{align}
		Let $g \in L^1((0,\infty),dm)$.
		We have
		\begin{align}
			Kg(x) &= \int_{0}^{x}dy\int_{y}^{\infty}g(z)dm(z) \label{} \\
			&= x\int_{0}^{\infty}g(z)dm(z) -Ig(x). \label{}  
		\end{align}
		Then it follows that
		\begin{align}
			K^{n-1}G_{\mu}(x) &= x\int_{0}^{\infty}K^{n-2}G_\mu(y)dm(y) - IK^{n-2}G_\mu(x) \label{} \\
			&= m^{\mu}_{n-1}x - IK^{n-2}G_\mu(x) \label{} \label{} \\
			&= m^{\mu}_{n-1}x - I( m^{\mu}_{n-2}x - IK^{n-3}G_\mu(x) ) \label{} \\
			&= m^{\mu}_{n-1}x - m^{\mu}_{n-2}Ix + I^2K^{n-3}G_\mu(x) \label{} \\
			&= \cdots \label{} \\
			&= \sum_{k=1}^{n-1}(-1)^{k-1}m^{\mu}_{n-k}I^{k-1}x + (-1)^{n-1}I^{n-1}G_\mu(x). \label{}
		\end{align}
		Then we have
		\begin{align}
			f_n^{\mu}(x) = \sum_{k=1}^{n-1}(-1)^{k-1}\frac{m^{\mu}_{n-k}}{m^{\mu}_{n}}I^{k-1}x + (-1)^{n-1}\frac{1}{m^{\mu}_{n}}I^{n-1}G_\mu(x). \label{rd-eq3}
		\end{align}
		From \eqref{rd-eq2} we have $M := \sup_{n \geq 0}\frac{m^{\mu}_{n-1}}{m^{\mu}_n} < \infty$,
		where we denote $m^{\mu}_0 = 1$.
		Then it follows that
		\begin{align}
			\sum_{k=1}^{n-1} \frac{m^{\mu}_{n-k}}{m^{\mu}_{n}}I^{k-1}x \leq \sum_{n=1}^{\infty}M^{n}I^{n-1}x = M\psi_{M}(x) < \infty. \label{rd-eq4}
		\end{align}
		Next we show the second term in the RHS of \eqref{rd-eq3} vanishes as $n \to \infty$.
		It is not difficult to check that
		\begin{align}
			I^{n}G_\mu(x) \leq  \frac{1}{n!} x\left( \int_{0}^{x}ydm(y) \right)^n, \label{rd-eq5}
		\end{align}
		and since $m^{\mu}_{n} = \prod_{1 \leq i \leq n} (m_{i}^{\mu} / m_{i-1}^{\mu}) \geq M^{-n}$,
		we obtain for every $R > 0$
		\begin{align}
			\lim_{n \to \infty}\sup_{x \in [0,R]}\frac{1}{m^{\mu}_{n}}I^{n-1}|G_\mu(x)| = 0. \label{}
		\end{align}
		Then from \eqref{rd-eq3} and the dominated convergence theorem, we have
		\begin{align}
			\lim_{n \to \infty}f_n^{\mu}(x) = \lambda\psi_{-\lambda}(x). \label{}
		\end{align}
		From \eqref{rd-eq4} and \eqref{rd-eq5}, we have 
		\begin{align}
			f_n^{\mu}(x) \leq M\psi_{M}(x) + \frac{M^{n}}{(n-1)!}x \left( \int_{0}^{x}ydm(y) \right)^{n-1} \leq M\psi_{M}(x) + Mx \mathrm{e}^{M \int_{0}^{x}ydm(y)} \label{}
		\end{align}
		and, it is obvious that 
		\begin{align}
			\int_{0}^{R}(M\psi_{M}(x) + Mx \mathrm{e}^{M \int_{0}^{x}ydm(y)})dm (x)< \infty \label{}
		\end{align}
		for every $R > 0$. Hence, from the dominated convergence theorem, it follows that
		\begin{align}
			\Phi^n \mu \xrightarrow[n \to \infty]{} \nu_{\lambda}. \label{}
		\end{align}
	\end{proof}	

	For the proof of Theorem \ref{thm:hierarchyOfConds}, we prepare a continuity result for the transform $\Phi$.

	\begin{Prop} \label{prop:contOfPhi}
		Let $\mu_{n}, \mu \in \cP(0,\infty)$ such that $\bE_{\mu_{n}}T_0, \ \bE_{\mu}T_{0} < \infty$.
		Suppose $\mu_{n} \xrightarrow[n \to \infty]{} \mu$.
		Then the following are equivalent:
		\begin{enumerate}
			\item $\bE_{\mu_{n}}T_{0} \xrightarrow[n \to \infty]{} \bE_{\mu} T_{0}$.
			\item $\Phi \mu_{n} \xrightarrow[n \to \infty]{} \Phi \mu$.
		\end{enumerate}
	\end{Prop}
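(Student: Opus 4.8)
The plan is to exploit the explicit $dm$-density of the renewal transform. By Proposition~\ref{rd-Rep. of density wrt dm} (with $n=1$), $\Phi\mu(dx)=\frac{G_\mu(x)}{\bE_\mu T_0}\,dm(x)$ and likewise $\Phi\mu_n(dx)=\frac{G_{\mu_n}(x)}{\bE_{\mu_n}T_0}\,dm(x)$, where $G_\mu(x)=\int_0^x\mu(y,\infty)\,dy$; note that each $\Phi\mu_n$ is a probability measure, since its total $dm$-mass is $\frac{1}{\bE_{\mu_n}T_0}\int_0^\infty\bP_{\mu_n}[T_0>t]\,dt=1$. Combining the definition of $\Phi$ with \eqref{rd-green function formula}, one also has the pairing $\int_0^\infty g\,d\Phi\mu=\frac{1}{\bE_\mu T_0}\int_0^\infty Kg\,d\mu$ for $g$ satisfying \eqref{rd-eq12}, and correspondingly for $\mu_n$. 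I would treat the two implications separately, each reducing to an elementary convergence statement.

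For $(i)\Rightarrow(ii)$ I would argue by Scheff\'e's lemma on the densities. Since $\mu_n\to\mu$ weakly, $\mu_n(y,\infty)\to\mu(y,\infty)$ at every continuity point of the tail $y\mapsto\mu(y,\infty)$, hence for all but countably many $y$; as $0\le\mu_n(y,\infty)\le 1$, bounded convergence on $[0,x]$ then gives $G_{\mu_n}(x)\to G_\mu(x)$ for every $x>0$. Together with hypothesis $(i)$, i.e.\ $\bE_{\mu_n}T_0\to\bE_\mu T_0\in(0,\infty)$, this yields $f_1^{\mu_n}=G_{\mu_n}/\bE_{\mu_n}T_0\to G_\mu/\bE_\mu T_0=f_1^\mu$ pointwise, in particular $dm$-a.e. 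Since $f_1^{\mu_n}$ and $f_1^\mu$ are probability densities with respect to $dm$, Scheff\'e's lemma upgrades this to $L^1(dm)$-convergence, so that $\Phi\mu_n\to\Phi\mu$ in total variation and a fortiori weakly.

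For $(ii)\Rightarrow(i)$ I would isolate the normalising constant through a single well-chosen test function. Fix a nonzero nonnegative continuous $g_0$ with compact support in $(0,\infty)$; then $Kg_0(x)=\int_0^\infty (x\wedge y)g_0(y)\,dm(y)$ is bounded (the integration is over the compact support of $g_0$) and continuous, so weak convergence alone gives $\int Kg_0\,d\mu_n\to\int Kg_0\,d\mu$. The pairing above rearranges to $\bE_{\mu_n}T_0=\big(\int Kg_0\,d\mu_n\big)\big/\big(\int g_0\,d\Phi\mu_n\big)$, while hypothesis $(ii)$ forces $\int g_0\,d\Phi\mu_n\to\int g_0\,d\Phi\mu$, a limit that is strictly positive because $G_\mu>0$ on $(0,\infty)$ and $dm$ has full support. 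Letting $n\to\infty$ then yields $\bE_{\mu_n}T_0\to\int Kg_0\,d\mu\big/\int g_0\,d\Phi\mu=\bE_\mu T_0$. The only steps requiring genuine care are the pointwise convergence $G_{\mu_n}\to G_\mu$ (handling the countable set of atoms of $\mu$) and the boundedness, continuity, and strict positivity needed to license the division in the second implication; I expect the former to be the main technical point, and both become routine once the density representation of Proposition~\ref{rd-Rep. of density wrt dm} is in hand.
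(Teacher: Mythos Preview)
Your proof is correct. The implication $(ii)\Rightarrow(i)$ is exactly the paper's argument: the paper also writes $(\bE_{\mu_n}T_0)\,\Phi\mu_n(f)=\int Kf\,d\mu_n$ for compactly supported continuous $f$, observes that $Kf$ is bounded continuous, and leaves the rest as ``obvious''.

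The genuine difference is in $(i)\Rightarrow(ii)$. The paper uses the \emph{same} pairing identity symmetrically: from $\mu_n(Kf)\to\mu(Kf)$ and $\bE_{\mu_n}T_0\to\bE_\mu T_0$ one gets $\Phi\mu_n(f)\to\Phi\mu(f)$ for every compactly supported continuous $f$, hence vague convergence, which upgrades to weak convergence since the limit $\Phi\mu$ is a probability measure. Your route instead passes through the densities and Scheff\'e's lemma, using the pointwise convergence $G_{\mu_n}\to G_\mu$ obtained from weak convergence of tails at continuity points. This is slightly more work (you have to handle the countable set of atoms and invoke Scheff\'e), but it buys you a stronger conclusion: convergence in total variation rather than just weakly. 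The paper's approach is more economical and pleasantly symmetric---one identity handles both directions---while yours extracts sharper information in the forward direction.
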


	\begin{proof}
		Let $f$ be a continuous function with a compact support on $(0,\infty)$.
		From \eqref{wellknownformula} it holds
		\begin{align}
			(\bE_{\mu_{n}}T_{0})\Phi \mu_{n}(f) &= \int_{0}^{\infty}\bP_{\mu_{n}}[f(X_{t})1\{ T_{0} > t \}]dt \label{} \\
			&= \int_{0}^{\infty}\mu_{n}(dx) \int_{0}^{\infty}f(y) (x \wedge y)dm(y). \label{}
		\end{align}
		Since the function $f$ is compactly supported, the function $Kf(x) := \int_{0}^{\infty}f(y)(x \wedge y)dm(y) \ (x > 0)$ is bounded continuous.
		The rest of the proof is obvious.
	\end{proof}

	We prove Theorem \ref{thm:hierarchyOfConds}.
		
	\begin{proof}[Proof of Theorem 1.3]
		The implication (i) $\Rightarrow$ (ii) is obvious.
		We first show (ii) $\Rightarrow$ (iii).
		Note that when we set $g(t) := \bP_{\mu}[T_0 > \log t]$, the condition (ii) is equivalent to
		\begin{align}
			\lim_{t \to \infty}\frac{g(st)}{g(t)} = s^{-\lambda} \quad (s > 0), \label{} 
		\end{align}
		the regular variation of the function $g$ at $\infty$ of order $-\lambda$.
		Then from Karamata's theorem \cite[Proposition 1.5.10, Theorem 1.6.1]{Regularvariation},
		the condition (ii) is equivalent to the following:
		\begin{align}
			h(t) := \frac{1}{\bP_{\mu}[T_0 > t]}\int_{t}^{\infty}\bP_{\mu}[T_0 > s]ds
			\xrightarrow{t \to \infty} \frac{1}{\lambda}. \label{eq7}
		\end{align}
		From Fubini's theorem, it holds for $n \geq 2$
		\begin{align}
			m_n^{\mu} = \frac{1}{(n-2)!}\int_{0}^{\infty}t^{n-2}\bP_{\mu}[T_0 > t]h(t)dt. \label{}
		\end{align}
		For $R > 0$ it is not difficult to see
		\begin{align}
			\frac{\int_{0}^{R}t^{n-2}\bP_{\mu}[T_0 > t]h(t)dt}{\int_{R}^{\infty}t^{n-2}\bP_{\mu}[T_0 > t]h(t)dt} \xrightarrow{n\to \infty} 0.\label{}
		\end{align}
		Thus, from \eqref{eq7} it follows
		\begin{align}
			\lim_{n \to \infty}\frac{m_{n}^\mu}{m_{n-1}^\mu} = \lim_{n \to \infty}\frac{\int_{0}^{\infty}t^{n-2}\bP_{\mu}[T_0 > t]h(t)dt}{\int_{0}^{\infty}t^{n-2}\bP_{\mu}[T_0 > t]dt} = \frac{1}{\lambda}. \label{}
		\end{align}

		Since we have already shown (iii) $\Rightarrow$ (iv) in Theorem \ref{rd-Convergense of RDS},
		we finally show (iv) $\Rightarrow$ (iii).
		Set $\mu_{n} := \Phi^{n} \mu$.
		Since it holds $\mu_{n} \xrightarrow[n \to \infty]{} \nu_{\lambda}$ and $\Phi \mu_{n} = \mu_{n+1} \xrightarrow[n \to \infty]{} \Phi\nu_{\lambda} = \nu_{\lambda}$,
		we have from Proposition \ref{prop:contOfPhi} and Proposition \ref{rd-Reccurence rel. of FHT}
		\begin{align}
			\frac{m_{n+1}^{\mu}}{m_{n}^{\mu}} = \bE_{\mu_{n}}T_{0} \xrightarrow[n \to \infty]{} \frac{1}{\lambda}. \label{}
		\end{align}
	\end{proof}

	\begin{Rem}
		The condition (iii) in Theorem \ref{thm:hierarchyOfConds} implies
		\begin{align}
			\lim_{t \to \infty} \frac{1}{t} \log \bP_{\mu}[T_{0} > t] = - \lambda. \label{eq11}
		\end{align}
		Indeed, under the condition (iii), we see from Proposition \ref{rd-Reccurence rel. of FHT} that
		$\lim_{n \to \infty} m_{j}^{\Phi^{n} \mu} = j! / \lambda^{j}$ for $j \geq 1$,
		which implies $\bP_{\Phi^{n}\mu}[T_{0} \in dt] \xrightarrow[n \to \infty]{} \lambda \mathrm{e}^{-\lambda t}dt$ and $m^{\mu}_{\alpha} / m^{\mu}_{\alpha+1} \xrightarrow[\alpha \to \infty]{} \lambda$.
		Then it follows
		\begin{align}
			\frac{1}{\alpha} \log m^{\mu}_{\alpha} = \frac{1}{\alpha} \sum_{0 \leq j < \floor{\alpha}} \log (m^{\mu}_{\alpha - j } / m^{\mu}_{\alpha - j- 1}) \xrightarrow[\alpha \to \infty]{} - \log \lambda. \label{eq16}
		\end{align}
		Then applying a Tauberian theorem of exponential type \cite[Theorem 1]{Tauberian_exp_type} to the function
		\begin{align}
			F(\alpha) := \int_{0}^{\infty} \mathrm{e}^{\alpha \log (t / (\alpha!)^{1/\alpha})}\bP_{\mu}[T_{0} \in dt] \  (= m^{\mu}_{\alpha}), \label{} 
		\end{align}
		we see \eqref{eq11} and \eqref{eq16} are equivalent.
	\end{Rem}

\bibliography{arxiv01.bbl} 

\begin{thebibliography}{10}

\bibitem{JumpBoundary}
I.~Ben-Ari and R.~G. Pinsky.
\newblock Ergodic behavior of diffusions with random jumps from the boundary.
\newblock {\em Stochastic Process. Appl.}, 119(3):864--881, 2009.

\bibitem{Regularvariation}
N.~H. Bingham, C.~M. Goldie, and J.~L. Teugels.
\newblock {\em Regular variation}, volume~27 of {\em Encyclopedia of
  Mathematics and its Applications}.
\newblock Cambridge University Press, Cambridge, 1987.

\bibitem{QSDpopulation}
P.~Cattiaux, P.~Collet, A.~Lambert, S.~Mart\'{\i}nez, S.~M\'{e}l\'{e}ard, and
  J.~San Mart\'{\i}n.
\newblock Quasi-stationary distributions and diffusion models in population
  dynamics.
\newblock {\em Ann. Probab.}, 37(5):1926--1969, 2009.

\bibitem{CMM}
P.~Collet, S.~Mart\'{\i}nez, and J.~San Mart\'{\i}n.
\newblock Asymptotic laws for one-dimensional diffusions conditioned to
  nonabsorption.
\newblock {\em Ann. Probab.}, 23(3):1300--1314, 1995.

\bibitem{Quasi-stationary_distributions}
P.~Collet, S.~Mart\'{\i}nez, and J.~San Mart\'{\i}n.
\newblock {\em Quasi-stationary distributions}.
\newblock Probability and its Applications (New York). Springer, Heidelberg,
  2013.
\newblock Markov chains, diffusions and dynamical systems.

\bibitem{RenewalDynamicalApproach}
P.~A. Ferrari, H.~Kesten, S.~Martinez, and P.~Picco.
\newblock Existence of quasi-stationary distributions. {A} renewal dynamical
  approach.
\newblock {\em Ann. Probab.}, 23(2):501--521, 1995.

\bibitem{Hening}
A.~Hening and M.~Kolb.
\newblock Quasistationary distributions for one-dimensional diffusions with
  singular boundary points.
\newblock {\em Stochastic Process. Appl.}, 129(5):1659--1696, 2019.

\bibitem{Ito_essentials}
K.~It\^{o}.
\newblock {\em Essentials of stochastic processes}, volume 231 of {\em
  Translations of Mathematical Monographs}.
\newblock American Mathematical Society, Providence, RI, 2006.
\newblock Translated from the 1957 Japanese original by Yuji Ito.

\bibitem{McKean:elementary}
H.~P.~McKean Jr.
\newblock Elementary solutions for certain parabolic partial differential
  equations.
\newblock {\em Trans. Amer. Math. Soc.}, 82:519--548, 1956.

\bibitem{Kallenberg}
O.~Kallenberg.
\newblock {\em Foundations of modern probability}.
\newblock Probability and its Applications (New York). Springer-Verlag, New
  York, second edition, 2002.

\bibitem{Tauberian_exp_type}
Y.~Kasahara.
\newblock Tauberian theorems of exponential type.
\newblock {\em J. Math. Kyoto Univ.}, 18(2):209--219, 1978.

\bibitem{Kolb}
M.~Kolb and D.~Steinsaltz.
\newblock Quasilimiting behavior for one-dimensional diffusions with killing.
\newblock {\em Ann. Probab.}, 40(1):162--212, 2012.

\bibitem{KotaniWatanabe}
S.~Kotani and S.~Watanabe.
\newblock Kre\u{\i}n's spectral theory of strings and generalized diffusion
  processes.
\newblock In {\em Functional analysis in {M}arkov processes ({K}atata/{K}yoto,
  1981)}, volume 923 of {\em Lecture Notes in Math.}, pages 235--259. Springer,
  Berlin-New York, 1982.

\bibitem{Littin}
J.~Littin.
\newblock Uniqueness of quasistationary distributions and discrete spectra when
  {$\infty$} is an entrance boundary and 0 is singular.
\newblock {\em J. Appl. Probab.}, 49(3):719--730, 2012.

\bibitem{DoAofOU}
M.~Lladser and J.~San Mart\'{\i}n.
\newblock Domain of attraction of the quasi-stationary distributions for the
  {O}rnstein-{U}hlenbeck process.
\newblock {\em J. Appl. Probab.}, 37(2):511--520, 2000.

\bibitem{Mandl}
P.~Mandl.
\newblock Spectral theory of semi-groups connected with diffusion processes and
  its application.
\newblock {\em Czechoslovak Math. J.}, 11(86):558--569, 1961.

\bibitem{Ratesofdecay}
S.~Mart\'{\i}nez and J.~San Mart\'{\i}n.
\newblock Rates of decay and {$h$}-processes for one dimensional diffusions
  conditioned on non-absorption.
\newblock {\em J. Theoret. Probab.}, 14(1):199--212, 2001.

\bibitem{DoAofBM}
S.~Martinez, P.~Picco, and J.~San Martin.
\newblock Domain of attraction of quasi-stationary distributions for the
  {B}rownian motion with drift.
\newblock {\em Adv. in Appl. Probab.}, 30(2):385--408, 1998.

\bibitem{RogersWilliams2}
L.~C.~G. Rogers and D.~Williams.
\newblock {\em Diffusions, {M}arkov processes, and martingales. {V}ol. 2}.
\newblock Cambridge Mathematical Library. Cambridge University Press,
  Cambridge, 2000.
\newblock It\^{o} calculus, Reprint of the second (1994) edition.

\bibitem{Takeda:QSD}
M.~Takeda.
\newblock Existence and uniqueness of quasi-stationary distributions for
  symmetric {M}arkov processes with tightness property.
\newblock {\em J. Theoret. Probab.}, 32(4):2006--2019, 2019.

\bibitem{QSD_unifying_approach}
K.~Yamato.
\newblock A unifying approach to non-minimal quasi-stationary distributions for
  one-dimensional diffusions.
\newblock {\em J. Appl. Probab.}, 59(4), 2022.
\newblock to appear.

\end{thebibliography}
\bibliographystyle{plain}

\end{document}